\documentclass[11pt]{article}
\usepackage{amsmath, amsthm, amssymb, amsfonts, latexsym,amscd}
\usepackage{authblk}
\usepackage{tikz}
\usetikzlibrary{graphs,graphs.standard} 
\usepackage{mathtools}
\usetikzlibrary{shadings,patterns}
\newtheorem{theorem}{Theorem}[section]
\newtheorem{corollary}[theorem]{Corollary}
\newtheorem{lemma}[theorem]{Lemma}
\newtheorem{proposition}[theorem]{Proposition}

\newtheorem{remark}[theorem]{Remark}
\newtheorem{example}[theorem]{Example}

\newtheorem{conjecture}[theorem]{Conjecture}
\begin{document}

\title{\bf On the sum of eccentric subgraph numbers of the vertices of a connected graph}

\author[1]{Akash De \thanks{Corresponding Author: akash.de.2023@niser.ac.in} }
\author[2]{Kamal Lochan Patra \thanks{klpatra@niser.ac.in} }
\affil[1,2] {School of Mathematical Sciences,
National Institute of Science Education and Research Bhubaneswar,
P.O.- Jatni, District- Khurda, Odisha - 752050, India 
}
\affil[1,2] { Homi Bhabha National Institute(HBNI), 
Training School Complex, Anushakti Nagar, Mumbai - 400094, India 
}
\date{}
\maketitle
\begin{abstract}
Let $G$ be a connected graph. The eccentric subgraph number $\epsilon(v)$ of a vertex $v$ of $G$ is defined as $\epsilon(v)=\min\{f_G(v,u): u\in V(G)\}$, where $f_G(v,u)$ is the number of connected subgraphs of $G$ containing both $v$ and $u$. The  eccentric subgraph index $ES(G)$ of $G$ is  defined as $ES(G)=\displaystyle\sum_{v\in V(G)}\epsilon(v).$ In this paper, we obtain the unicyclic graph that maximizes the eccentric subgraph index among all unicyclic graphs on $n$ vertices with a fixed girth. The graph that maximizes the eccentric subgraph index among all unicyclic graphs on $n$ vertices is also obtained. We then characterize the trees that maximize the eccentric subgraph index over trees on $n$ vertices with fixed diameter. Among all connected graphs on $n$ vertices with a fixed number of pendant vertices, the graph which maximizes the eccentric subgraph index is also characterized.

\noindent {\bf Keywords:} Core center; Eccentric subgraph number; Girth; Unicyclic graph\\

\noindent {\bf AMS subject classification.} 05C09; 05C30; 05C35 
\end{abstract}

\section{Introduction}
Throughout this paper, graphs are undirected, finite, simple and connected. Let $G$ be a graph with vertex set $V(G)$ and edge set $E(G)$. A vertex $v$ of $G$ is called a  pendant vertex if degree of $v$ is one. The distance $d(u, v)$ between two vertices $u$ and $v$ of $G$ is defined as the number of edges in a shortest path joining them. The  diameter diam$(G)$ of $G$ is defined as diam$(G)=\max\{d(u,v)|u,v\in V(G)\}.$ A graph $G$ on $n$ vertices is said to be unicyclic if it has $n$ edges. The girth of $G$ is the length of a shortest cycle in $G$. For $v_1,v_2,\ldots,v_k\in V(G)$, $f_G(v_1,v_2,\ldots,v_k)$ denotes the number of connected subgraphs of $G$ containing $v_1,v_2,\ldots,v_k$. By $f_G(v_1,\ldots,v_{k-1},\overline{v_k})$, we mean the number of connected subgraph of $G$ containing $v_1,\ldots,v_{k-1}$ but not $v_k$.

For a vertex $v$ of  $G$, the {\it eccentric subgraph number} $\epsilon_G(v)$(or simply $\epsilon(v)$) is defined as $\epsilon_G(v)=\min\{f_G(v,u): u\in V(G)\}$.  A {\it core vertex} of $G$ is a vertex with the maximum eccentric subgraph number, and the set of all core vertices is called the {\it core center} of $G$. We denote the  core center of $G$ by $\mathfrak{C}(G)$. This is first defined in \cite{Pandey}, which is a generalization of eccentric subtree number for trees(see \cite{Zhang}) to connected graphs. The core center $\mathfrak{C}(G)$ shares certain common properties with other central parts of $G$ (see \cite{Pandey}). The {\it eccentric subgraph index} of $G$ is denoted by $ES(G)$ and defined as $ES(G)=\displaystyle\sum_{v\in V(G)}\epsilon(v).$ We consider $ES(G)$ as the index associated with the central part $\mathfrak{C}(G)$ of $G$. We are interested in obtaining the extremal graphs associated with the eccentric subgraph index. Following is the first and only known result in this direction.
\begin{theorem}[\cite{Zhang}, Theorem 4.3]\label{tree}
Among all trees on n vertices, the maximum eccentric subgraph index is attained by the star and the minimum eccentric subgraph index is attained by the path.
\end{theorem}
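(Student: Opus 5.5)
For a tree $T$, $f_T(v,u)$ counts the subtrees containing the path $P(v,u)$. Writing $P(v,u)=w_0w_1\cdots w_k$ with $w_0=v$, $w_k=u$, I will use the standard product formula
\[
f_T(v,u)=\prod_{i=0}^{k} g(w_i),
\]
where $g(w_i)$ is the number of subtrees of the forest $T-E(P)$ that contain $w_i$, each such subtree lying in the component of $w_i$. Since a subtree containing a leaf $u'$ lying beyond $u$ (with respect to $v$) also contains $u$, we get $f(v,u')\le f(v,u)$. Hence the minimum defining $\epsilon(v)$ is attained at a pendant vertex $u\neq v$ (for $n\ge 2$).

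For the maximum, I first compute the star $S_n$ with center $c$ and $n-1$ leaves.
\begin{itemize}
\item For the center, $\epsilon(c)=f(c,\ell)=2^{n-2}$ for any leaf $\ell$.
\item For a leaf $\ell$, $\epsilon(\ell)=f(\ell,\ell')=2^{n-3}$ for another leaf $\ell'$.
\end{itemize}
So $ES(S_n)=2^{n-2}+(n-1)2^{n-3}=(n+1)2^{n-3}$.

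I then need $\epsilon_T(v)\le 2^{n-3}$ for every non-center-like vertex, together with a global bound. The key estimate is this. For any $v$, take a leaf $u\ne v$ at maximum distance from $v$, or any leaf not equal to $v$. A subtree containing $v$ and $u$ contains the path $P(v,u)$ of $k+1$ vertices. The remaining $n-k-1$ vertices are each in or out, and not all choices give subtrees, so $f(v,u)\le 2^{n-k-1}$. Thus $\epsilon(v)\le 2^{n-1-k}$ where $k=\max_u d(v,u)$ is the eccentricity.

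In a non-star tree on $n\ge 4$ vertices, every vertex has eccentricity at least $2$, so $\epsilon(v)\le 2^{n-3}$ for all $v$, giving $ES(T)\le n2^{n-3}$. This alone is not enough, since the star gives $(n+1)2^{n-3}$. I will refine the estimate by noting that the bound $2^{n-k-1}$ is attained only when every off-path subset yields a subtree, i.e.\ when all other vertices are adjacent to the path in a way that any subset stays connected. Equivalently, every vertex off the path is a leaf adjacent to the path. For eccentricity exactly $2$ and a non-star tree, the other leaves create slack. I expect to show that at most one vertex attains $2^{n-3}$, or that the deficits sum to more than $2^{n-3}$.

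The main obstacle is exactly this counting of deficits. My first attempt is an inductive argument instead. Let $T\ne S_n$ and choose a leaf $x$ attached at a vertex $y$ that is not a center of maximum degree. Moving $x$, or a whole branch, toward a vertex of maximum degree should increase every $\epsilon$, by the product formula, since $g$-values multiply and branches concentrate. This is a standard transformation: detaching pendant paths and reattaching them as pendant edges at one vertex. Iterating it reaches $S_n$.

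For the minimum I would argue symmetrically, using the lower bound through the transformation that merges two pendant paths at a branching vertex into one longer path. The aim is to show that this transformation decreases $\epsilon(v)$ for all $v$, using the product formula and the inequality $ab\ge a+b-1$ for the path factors. The path $P_n$ then minimizes. This reduces everything to monotonicity of each $\epsilon(v)$ under the two transformations, which I expect to be the delicate step. The difficulty is that the minimizing leaf $u$ may change after the transformation, and this must be handled case by case depending on whether $u$ lies in the moved branch.
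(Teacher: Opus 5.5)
\textbf{Maximum.} You already had a complete proof of this half and then dismissed it. Since $n2^{n-3}<(n+1)2^{n-3}$, your estimate
$\epsilon_T(v)\le f_T(v,u)\le 2^{n-1-e(v)}\le 2^{n-3}$ (take $u$ at distance $e(v)\ge 2$) gives $ES(T)\le n2^{n-3}<ES(K_{1,n-1})$ for every non-star tree, including uniqueness.

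This is shorter than the paper's route. The paper only cites this theorem from Zhang et al.\ and re-derives the maximum as a corollary of Theorem~\ref{thm2}. There, contracting a cut edge and re-attaching $v_2$ as a pendant vertex raises $\epsilon$ strictly at every vertex except possibly $v_2$ (Lemma~\ref{lem1}), so $v_1$ and $v_2$ have to be handled jointly. The advantage of that transformation is that it is reused later for unicyclic graphs and caterpillars.

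You should drop the ``deficit'' refinement and the leaf-moving induction. Besides being unnecessary, the claim that moving a leaf to a maximum-degree vertex raises every $\epsilon$ is false. Take $w$ with leaves $l_1,l_2,l_3$, a path $wu_1u_2u_3$, and a leaf $x$ at $u_1$. Then $\epsilon(x)=\min\{f(x,l_i),f(x,u_3)\}=\min\{12,9\}=9$. After moving $x$ to $w$ it becomes $\min\{16,8\}=8$.

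\textbf{Minimum.} This half has a genuine gap: you prove nothing, and the step you propose to prove is false. Merging two pendant paths at a branching vertex does not decrease every $\epsilon(v)$, in either orientation. Consider the spider with legs $wp_1p_2p_3$, $wq_1q_2q_3$ and $wz$. Here $\epsilon(p_3)=\min\{f(p_3,q_3),f(p_3,z)\}=\min\{2,4\}=2$. After hanging $q_1q_2q_3$ below $p_3$, the tree is $P_8$ with $p_3$ in position $5$, so $\epsilon(p_3)=4$. By symmetry, the other orientation fails at $q_3$. A vertex-by-vertex comparison therefore cannot work, and handling it ``case by case'' will not rescue it; you need a global argument.

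Here is one that works with your product formula. Each factor satisfies $g(w_i)\ge |V(T_{w_i})|$, because the paths from $w_i$ inside its component are distinct subtrees. For integers $a_i\ge 1$ we have $\prod a_i\ge 1+\sum (a_i-1)$. Together these give $f_T(v,u)\ge n-d(v,u)$, hence $\epsilon_T(v)\ge n-e(v)$, with equality at every vertex of $P_n$. Therefore
$ES(T)\ge n^2-\sum_{v} e(v)\ge n^2-\sum_{v\in V(P_n)} e(v)=ES(P_n)$.
The middle inequality is the fact that the path maximizes total eccentricity. For trees this follows by counting along a diametral path: at most $2t$ vertices have eccentricity at least $n-t$.
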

For the star  $K_{1,n-1}$,  $n\geq 3$ we have, $\epsilon(v)=2^{n-2}$ where $v\in V(K_{1,n-1})$ is the vertex of degree $n-1$ and $\epsilon(u)=2^{n-3}$ for any other vertex $u$ of $K_{1,n-1}$. Thus, $$ES(K_{1,n-1})=2^{n-2}+(n-1)2^{n-3}=(n+1)2^{n-3}.$$ For the path $P_n:12\cdots n$, $n\geq 2$ we have  $\epsilon(k)=f_{P_n}(k,n)=k=\epsilon(n-k+1) \;\; \mbox{for} \;\; 1\leq k\leq \left\lceil\frac{n}{2}\right\rceil$. Thus,   
 
 $$ES(P_n)=\begin{cases}
(\frac{n}{2})(\frac{n}{2}+1) &\mbox{if $n$ is even}; \\
(\frac{n+1}{2})^2 & \mbox{if $n$ is odd.}
\end{cases}$$

\subsection{Motivation and main results}

There are several indices defined on a connected graph $G$ and some of these indices are related to a central part of $G$. The distance
$D_G(v)$ of a vertex $v$ in $G$ is defined as $D_G(v) =\displaystyle\sum_{u\in V(G)} d(v,u)$. A vertex of minimum distance is called a median vertex of
$G$ and the set of all median vertices is called the median of $G$. The Wiener index of $G$, denoted by $W(G)$, is defined as the sum of distances
between all unordered pairs of its vertices, i.e.,
$$W(G)=\displaystyle\sum_{\{u,v\}\subseteq V(G)} d(v,u)=\frac{1}{2}\displaystyle\sum_{v\in V(G)}D_G(v).$$
The Wiener index of $G$ is associated with the central part median of $G$. A lot has been studied on Wiener index focusing on the extremal graphs associated with it.
To know more about Wiener index and its associated extremal graphs, we refer to some recent works (see \cite{Pandey2,Xu}) and the references therein. 

The eccentricity of a vertex $v$ of $G$, denoted by $e(v)$, is defined as $e(v)=\max\{d(v,u)|u\in V(G)\}$. A vertex of minimum eccentricity in $G$ is called a central vertex and the set of all central vertices of $G$ is called the center of $G$. The total eccentricity index of $G$, denoted by $TE(G)$, is defined as the sum of eccentricities of all its vertices, i.e.,
$$TE(G)=\displaystyle\sum_{v\in V(G)} e(v).$$
The total eccentricity index is also studied by the name of eccentricity sum and total eccentricity (see \cite{Fat,Smith}). The total eccentricity index is associated with the central part center of $G$. For a more comprehensive understanding of the total eccentricity index and its associated extremal graphs, we refer the reader to a recent study (see \cite{Pandey3}) and the references therein.

The subgraph number of a vertex $v$ of $G$, denoted by $f_G(v)$, is defined as the number of connected subgraphs of $G$ containing $v$. The set of vertices of $G$ having maximum subgraph number is called the subgraph core of $G$. The number of connected subgraphs of $G$ is called the subgraph index of $G$ (see \cite{Pandey4}). The subgraph core of $G$ is considered to be a central part of $G$ (see \cite{Pandey}) and subgraph index is the index associated to it. For more on subgraph index of $G$ and its associated extremal graphs, we refer to \cite{Pandey4} and the references therein.

Motivated by these three above indices of $G$ and their association with different central parts of $G$, we call the sum $\displaystyle\sum_{v\in V(G)}\epsilon(v)$ as eccentric subgraph index of $G$. It is interesting to know about some extremal graphs associated with the eccentric subgraph index. In this paper, we obtain the following results:
\begin{enumerate}
\item[(i)] We obtain the unique graph that maximizes the eccentric subgraph index among all unicyclic graphs on $n$ vertices with a fixed girth.
\item[(ii)] We obtain the unique graph that maximizes the eccentric subgraph index among all unicyclic graphs of order $n$.
\item[(iii)] We characterize  the trees on $n$ vertices with a fixed diameter that maximize the eccentric subgraph index.
\item[(iv)] We obtain the unique graph maximizing the eccentric subgraph index among all connected graphs on $n$ vertices with a fixed number of pendant vertices. 
\end{enumerate}

\section{Preliminaries}\label{Prim}

We start this section with the following lemma.
\begin{lemma}\label{edge}
Let $G$ be a graph and let $u, v \in V(G)$ be two non-adjacent vertices. Let $\tilde{G}$ be the graph obtained from $G$ by joining $u$ and $v$ with an edge. Then $ES(G)<ES(\tilde{G})$.
\end{lemma}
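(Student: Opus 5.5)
The plan is to prove the strict inequality vertex by vertex: show $\epsilon_{\tilde G}(w)\ge \epsilon_G(w)$ for every $w\in V(G)$, and $\epsilon_{\tilde G}(u)>\epsilon_G(u)$ at least once. Summing over $V(G)=V(\tilde G)$ then gives $ES(G)<ES(\tilde G)$.

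\textbf{Pairwise comparison.} Fix $w,x\in V(G)$. Every connected subgraph of $G$ containing $w$ and $x$ is a connected subgraph of $\tilde G$, because $E(G)\subseteq E(\tilde G)$. Hence $f_{\tilde G}(w,x)\ge f_G(w,x)$. The surplus $f_{\tilde G}(w,x)-f_G(w,x)$ equals the number of connected subgraphs of $\tilde G$ that contain $w$, $x$ and the new edge $uv$. To see this is positive, take any connected subgraph $H$ of $G$ containing $w,x,u,v$; for instance $H=G$, which is connected. Then $H+uv$ is a connected subgraph of $\tilde G$ not lying in $G$. So $f_{\tilde G}(w,x)>f_G(w,x)$ for all $w,x$, including $w=x$ (where the quantity is $f(w)$).

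\textbf{Passing to the minimum.} Choose $x^\ast$ with $\epsilon_{\tilde G}(w)=f_{\tilde G}(w,x^\ast)$. Then
\[
\epsilon_{\tilde G}(w)=f_{\tilde G}(w,x^\ast)>f_G(w,x^\ast)\ge \min_{x} f_G(w,x)=\epsilon_G(w).
\]
This gives strict inequality at every vertex, and summing over $w$ proves the lemma.

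The only point needing care is that connected subgraphs are counted as subgraphs, not just vertex sets, so adding the edge $uv$ to $G$ itself produces a genuinely new subgraph. Strictness does not need the edge to be incident to the minimizing pair; it only needs the whole graph $G$ to be connected, which is assumed throughout. With subgraphs counted this way, there is no real obstacle.
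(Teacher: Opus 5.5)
Your proof is correct and is essentially the paper's argument. Like the paper, you split $f_{\tilde G}(w,x)$ according to whether the new edge $uv$ is used, observe that the part avoiding it equals $f_G(w,x)$ while the other part is positive, then evaluate at a minimizer $x^\ast$ for $\tilde G$ and sum over vertices; your explicit witness ($\tilde G$ itself, which is connected because $G$ is) for the positivity of that surplus fills in a detail the paper leaves implicit.
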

\begin{proof}
Let $e=\{u,v\}$ be the edge of $\tilde{G}$ joining $u$ and $v$. For $x\in V(\tilde{G})$, $\epsilon_{\tilde{G}}(x)=f_{\tilde{G}}(x,w)$ for some $w\in V(\tilde{G})$. 
We have $f_{\tilde{G}}(x,w)=f_{\tilde{G}}^{e+}(x,w)+ f_{\tilde{G}}^{e-}(x,w)$, where $f_{\tilde{G}}^{e+}(x,w)$ is the number of connected subgraph of $\tilde{G}$ 
containing $x$, $w$ and the edge $e$ and $f_{\tilde{G}}^{e-}(x,w)$ is the number of connected subgraph of $\tilde{G}$ containing $x$, $w$ but not the edge $e$. Since $f_{\tilde{G}}^{e-}(x,w)=f_{G}(x,w)$ and $f_{\tilde{G}}^{e+}(x,w)>0$, we have $\epsilon_{\tilde{G}}(x)>f_{G}(x,w)\geq \epsilon_{G}(x)$. Thus $$ES(\tilde{G})=\displaystyle\sum_{x\in V(\tilde{G})}\epsilon_{\tilde{G}}(x)>\displaystyle\sum_{x\in V(G)}\epsilon_{G}(x)=ES(G).$$
\end{proof}
By Lemma \ref{edge} and Theorem \ref{tree}, we have the following results.
\begin{theorem}\label{thm1}
Among all graphs on $n$ vertices, the eccentric subgraph index is maximized by the complete graph $K_n$  and minimized by the path $P_n$.
\end{theorem}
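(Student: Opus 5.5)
The plan is to combine Lemma \ref{edge} with Theorem \ref{tree}, treating the maximum and the minimum separately. In both directions the key fact is the strict inequality of Lemma \ref{edge}: adding an edge to a connected graph strictly increases $ES$.

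\textbf{Maximum.} Let $G$ be any connected graph on $n$ vertices with $G\neq K_n$. Then $G$ has a pair of non-adjacent vertices. Add such edges one at a time to get a chain $G=G_0\subset G_1\subset\cdots\subset G_m=K_n$ with $m\geq 1$. Each $G_i$ is connected, since adding edges preserves connectivity. Applying Lemma \ref{edge} at each step gives $ES(G)<ES(G_1)<\cdots<ES(K_n)$. So $K_n$ is the unique maximizer.

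\textbf{Minimum.} Let $G$ be connected on $n$ vertices and let $T$ be a spanning tree of $G$. We have $E(G)\setminus E(T)=\{e_1,\dots,e_k\}$ for some $k\geq 0$. Adding $e_1,\dots,e_k$ to $T$ one at a time produces connected graphs ending at $G$. Applying Lemma \ref{edge} $k$ times gives $ES(T)\leq ES(G)$, with strict inequality whenever $k\geq 1$, that is, whenever $G$ is not a tree. Theorem \ref{tree} gives $ES(P_n)\leq ES(T)$. Together these yield $ES(P_n)\leq ES(G)$ for every connected $G$ on $n$ vertices.

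\textbf{Uniqueness.} If $G$ is not a tree, the strict inequality above shows $G$ is not a minimizer. Among trees, any uniqueness statement is exactly what Theorem \ref{tree} provides. I would cite it in whatever form \cite{Zhang} states it (the path being the extremal tree) and not claim more than the theorem gives.

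The argument has no real obstacle. The one point to check is that Lemma \ref{edge} is applied only to connected graphs, because $ES$ is defined only for connected graphs. This holds in both chains: the first starts from a connected $G$, the second from a spanning tree, and adding edges never disconnects a graph.
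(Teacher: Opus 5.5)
Your proposal is correct and is essentially the paper's argument: the paper gets the theorem directly from Lemma \ref{edge} (adding edges up to $K_n$ for the maximum, and from a spanning tree up to $G$ for the minimum) together with Theorem \ref{tree} for the comparison among trees. You are also right not to claim more uniqueness for the minimum than Theorem \ref{tree} provides; the statement does not assert it.
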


\begin{figure}[h!]
    \centering
  \begin{tikzpicture}[scale=1.2]
  \draw(-11,0) circle(0.7cm);
  \draw(-8.6,0) circle(0.7cm);
   \draw(-6,0) circle(0.7cm);
  \draw(-4.59,0) circle(0.7cm);
 \node[draw,circle,fill=black,scale=0.2](1)at (-10.3,0){};
  \node[draw,circle,fill=black,scale=0.2](2)at (-9.3,0){};
  \node[draw,circle,fill=black,scale=0.2](3)at (-5.3,0){};
   \node[draw,circle,fill=black,scale=0.2](4)at (-5.3,1){};
   \node (a) at (-9.85,0.12){};
    \node (b) at (-9.75,0.12){};
   \node (5) at (-11,-0.3){};
    \node (6) at (-8.5,-0.25){};
    \node (7) at (-6.2,-0.2){};
    \node (8) at (-4.57,-0.2){};
    \node (9) at (-9.8,-1){};
    \node (10) at (-5.2,-1.2){};
    
 \draw(1)--(2);
 \draw(3)--(4);
 \node[anchor=east] at (a) {$v_1$};
 \node[anchor=west] at (b) {$v_2$};
\node[anchor=east] at (3) {$v_1$};
\node[anchor=south] at (4) {$v_2$}; 
\node[anchor=south] at (5) {$G_1$}; 
\node[anchor=south] at (6) {$G_2$}; 
\node[anchor=south] at (7) {$G_1$}; 
\node[anchor=south] at (8) {$G_2$};
\node[anchor=south] at (9) {$G$}; 
\node[anchor=south] at (10) {$G'$};
\end{tikzpicture} 
\caption{Graphs $G$ and $G'$}\label{fig1}
\end{figure}

\begin{lemma}\label{lem1}
Let $G$ be a graph with a cut edge $e=\{v_1,v_2\}$. Let $G_1$ and $G_2$ be the  connected components of $G-e$ containing $v_1$ and $v_2$, respectively, with 
$|V(G_i)|\geq 2$ for $i=1,2$. Let $G'$ be the graph obtain from $G$ by identifying $v_1$ and $v_2$ and adding a pendant  vertex $v_2$  at the identified vertex  $v_{1}$(see Figure \ref{fig1}). Then $\epsilon_G(x)<\epsilon_{G'}(x)$ for all $x\in V(G)- v_2$.
\end{lemma}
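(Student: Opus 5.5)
The plan is to compare the counts directly. For each $x\in V(G)-v_2$ and each $u\in V(G')$, I will exhibit some $u^*\in V(G)$ with $f_G(x,u^*)<f_{G'}(x,u)$. Taking $u$ to be a vertex attaining $\epsilon_{G'}(x)$ then gives $\epsilon_G(x)\le f_G(x,u^*)<\epsilon_{G'}(x)$.

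I regard $G'$ as follows. The vertex $v_1$ is the cut vertex at which $G_1$ and a copy of $G_2$ are glued, with that copy's $v_2$ identified with $v_1$. A new pendant vertex, still called $v_2$, hangs at $v_1$. Every connected subgraph of $G'$ through $v_1$ then splits uniquely into three independent pieces: a connected piece of $G_1$ through $v_1$, a connected piece of $G_2$ through (the old) $v_2$, and the choice of whether to take the pendant edge. This gives a factor of $2$ for the pendant. Connected subgraphs of $G$ are decomposed in the same way according to whether they use $e$.

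By the symmetry between $G_1$ and $G_2$, it suffices to take $x\in V(G_1)$; this includes $x=v_1$, and the case $x\in V(G_2)-v_2$ is identical. There are three cases for $u$.

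\textbf{Case 1: $u\in V(G_1)$.} Take $u^*=u$. Then
\[
f_G(x,u)=f_{G_1}(x,u,\overline{v_1})+f_{G_1}(x,u,v_1)\bigl(1+f_{G_2}(v_2)\bigr),
\]
while
\[
f_{G'}(x,u)=f_{G_1}(x,u,\overline{v_1})+2f_{G_1}(x,u,v_1)f_{G_2}(v_2).
\]
Since $|V(G_2)|\ge 2$, we have $f_{G_2}(v_2)\ge 2$, so $2f_{G_2}(v_2)>1+f_{G_2}(v_2)$. Also $f_{G_1}(x,u,v_1)\ge 1$, because $G_1$ itself is such a subgraph. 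Hence the inequality is strict.

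\textbf{Case 2: $u\in V(G_2)-v_2$.} Again take $u^*=u$. Here $f_G(x,u)=f_{G_1}(x,v_1)f_{G_2}(v_2,u)$, and $f_{G'}(x,u)$ equals twice this quantity, which is strictly larger.

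\textbf{Case 3: $u$ is the new pendant $v_2$.} Here $f_{G'}(x,v_2)=f_{G_1}(x,v_1)f_{G_2}(v_2)$, which is exactly $f_G(x,v_2)$. So taking $u^*=v_2$ gives no strict inequality, and I expect this to be the one delicate point. To fix it, I choose $u^*\in V(G_2)-v_2$, which exists because $|V(G_2)|\ge2$. Then
\[
f_G(x,u^*)=f_{G_1}(x,v_1)f_{G_2}(v_2,u^*)<f_{G_1}(x,v_1)f_{G_2}(v_2),
\]
because the single vertex $\{v_2\}$ is counted in $f_{G_2}(v_2)$ but not in $f_{G_2}(v_2,u^*)$.

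For $x\in V(G_2)-v_2$, the analogous delicate case is $u=v_1$ in $G'$, and it is handled by taking $u^*\in V(G_1)-v_1$. This uses $|V(G_1)|\ge 2$. Combining the three cases, $f_{G'}(x,u)>\epsilon_G(x)$ for every $u\in V(G')$, and therefore $\epsilon_{G'}(x)>\epsilon_G(x)$.
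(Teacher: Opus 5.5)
Your proof is correct and follows essentially the paper's route: factor connected subgraphs at the cut vertex $v_1$, then compare $f_{G'}(x,r)$ with $f_G(x,r)$ for the minimizing $r$. Where you differ is the new pendant $v_2$, and there your substitute vertex $u^*$ on the far side is more careful than the paper: for $x\in V(G_1)$ it supplies the missing reason for the paper's bare assertion $\epsilon_G(x)<f_G(x,v_2)$, and for $x\in V(G_2)-v_2$ it covers the case $r=v_2$, where the paper's identity $f_{G'_2}(x,r,v_1)=2f_{G_2}(x,r,v_2)$ fails (indeed $f_{G'}(x,v_2)=f_{G_2}(x,v_2)f_{G_1}(v_1)<f_G(x,v_2)$ there). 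One small labelling slip: in the statement's $G'$ the delicate vertex for $x\in V(G_2)-v_2$ is the pendant $v_2$, not $v_1$ (for the identified vertex $f_{G'}(x,v_1)=2f_G(x,v_1)$), but your fix $u^*\in V(G_1)-v_1$ is exactly the right one for the pendant.
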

\begin{proof}
Let $G_1$ and $G'_2$ be the subgraphs  of $G'$ such that $G_1\cup G'_2=G'$ and $V(G_1)\cap V(G'_2)=\{v_1\}$. 

For $x\in V(G_1)$, let $\epsilon_{G'}(x)=f_{G'}(x,r)$. Clearly $r\neq v_1$.\\ 
Suppose $r\in V(G'_2)$. Then $f_{G'}(x,r)=f_{G_1}(x,v_1)f_{G'_2}(v_1,r)$.  We have two cases here.\\
{\bf Case-I:} $r\neq v_2$\\
Since $ 2f_{G_2}(v_2,r)= f_{G'_2}(v_1,r)$, we have
\begin{align*}
   &\epsilon_{G}(x)\leq f_G(x,r)=f_{G_1}(x,v_1)f_{G_2}(v_2,r)< f_{G_1}(x,v_1)f_{G'_2}(v_1,r)= f_{G'}(x,r)=\epsilon_{G'}(x). 
\end{align*} 
\noindent {\bf Case-II:} $r= v_2$\\
 In this case, we have 
\begin{align*}
   &\epsilon_{G}(x)< f_G(x,v_2)=f_{G_1}(x,v_1)f_{G_2}(v_2)= f_{G_1}(x,v_1)f_{G'_2}(v_1,v_2)= f_{G'}(x,v_2)=\epsilon_{G'}(x). 
\end{align*}
Suppose $r\in V(G_1)$. Then $f_{G'}(x,r)=f_{G_1}(x,r)+f_{G_1}(x,r,v_1)(f_{G'_2}(v_1) -1)$ and $f_{G}(x,r)=f_{G_1}(x,r)+f_{G_1}(x,r,v_1)f_{G_2}(v_2)$. Since $f_{G'_2}(v_1)=2f_{G_2}(v_2)$, we have
\begin{align*}
 f_{G'}(x,r)-f_{G}(x,r)=f_{G_1}(x,r,v_1)(f_{G'_2}(v_1)-1-f_{G_2}(v_2))>0 \;\;(\mbox{as $|V(G_2)|>1$}) .
\end{align*}
So, $\epsilon_{G}(x)\leq f_{G}(x,r)<f_{G'}(x,r)=\epsilon_{G'}(x)$. Hence $\epsilon_{G}(x)<\epsilon_{G'}(x)$ for $x\in V(G_1)$.\\

Now let $x\in V(G'_2)\setminus \{v_1,v_2\}$ and suppose $\epsilon_{G'}(x)=f_{G'}(x,r)$. Clearly $r\neq v_1$.\\
Suppose $r\in V(G_1)$. Then $f_{G'}(x,r)=f_{G'_2}(x,v_1)f_{G_1}(v_1,r)$. Since $f_{G'_2}(x,v_1)=2f_{G_2}(x,v_2)$, 
 \begin{align*}
  \epsilon_{G}(x)\leq f_G(x,r)=f_{G_2}(x,v_2)f_{G_1}(v_1,r)< f_{G'_2}(x,v_1)f_{G_1}(v_1,r)=f_{G'}(x,r)=\epsilon_{G'}(x).
 \end{align*}
Suppose $r\in V(G'_2)$. Then $f_{G'}(x,r)=f_{G'_2}(x,r,\bar{v_1})+f_{G'_2}(x,r,{v_1})+f_{G'_2}(x,r,v_1)(f_{G_1}(v_1)-1)$ and
    $f_G(x,r)=f_{G_2}(x,r,\bar{v_2})+f_{G_2}(x,r,v_2)+f_{G_2}(x,r,v_2)f_{G_1}(v_1)$.
Since $f_{G'_2}(x,r,\bar{v_1})=f_{G_2}(x,r,\bar{v_2})$ and $f_{G'_2}(x,r,{v_1})=2f_{G_2}(x,r,v_2)$, we have
\begin{align*}
   f_{G'}(x,r)-f_G(x,r)&=f_{G'_2}(x,r,v_1)f_{G_1}(v_1)-f_{G_2}(x,r,v_2)-f_{G_2}(x,r,v_2)f_{G_1}(v_1)\\
                       &=2f_{G_2}(x,r,v_2)f_{G_1}(v_1)-f_{G_2}(x,r,v_2)-f_{G_2}(x,r,v_2)f_{G_1}(v_1)\\
                       &=f_{G_2}(x,r,v_2)(f_{G_1}(v_1)-1)\\
                       &>0 \;\;(\mbox{as $|V(G_1)|>1$}) .
\end{align*}
So, $\epsilon_{G}(x)\leq f_G(x,r)< f_{G'}(x,r)=\epsilon_{G'}(x)$. Hence $\epsilon_{G}(x)<\epsilon_{G'}(x)$ for $x\in V(G'_2)\setminus \{v_1,v_2\}$ and this proves the result.
\end{proof}
In the next example we have shown that the result in Lemma \ref{lem1} may not be true for the vertex $v_2$, i.e. $\epsilon_{G}(v_2)$ may not be always less than $\epsilon_{G'}(v_2)$.

\begin{figure}[h!]
    \centering
  \begin{tikzpicture}[scale=0.7]
   \node[draw,circle,fill=black,scale=0.2,label=above:\(3\)](1)at (0,0){};
   \node[draw,circle,fill=black,scale=0.2,label=above:\(4\)](2)at (2,1){};
   \node[draw,circle,fill=black,scale=0.2,label=below:\(5\)](3)at (2,-1){};
   \node[draw,circle,fill=black,scale=0.2,label=above:\(2\)](4)at (-1.25,0){};
   \node[draw,circle,fill=black,scale=0.2,label=above:\(1\)](5)at (-2.5,0){};
    \node[draw,circle,fill=black,scale=0.2,label=above:\(6\)](6)at (2.9,1.5){};    \node[draw,circle,fill=black,scale=0.2,label=above:\(7\)](7)at (4,2.1){};
  \draw(1)--(2)--(3)--(1);
  \draw(1)--(4)--(5);
  \draw(2)--(6)--(7);

  \node[draw,circle,fill=black,scale=0.2,label=below:\(2\)](8)at (6,0){};
   \node[draw,circle,fill=black,scale=0.2,label=above:\(4\)](9)at (8,1){};
   \node[draw,circle,fill=black,scale=0.2,label=below:\(5\)](10)at (8,-1){};
   \node[draw,circle,fill=black,scale=0.2,label=above:\(1\)](11)at (4.5,0){};
   \node[draw,circle,fill=black,scale=0.2,label=above:\(3\)](12)at (6,1.2){};
    \node[draw,circle,fill=black,scale=0.2,label=above:\(6\)](13)at (8.9,1.5){};    \node[draw,circle,fill=black,scale=0.2,label=above:\(7\)](14)at (10,2.1){};
  \draw(8)--(9)--(10)--(8);
  \draw(8)--(11);
  \draw(8)--(12);
  \draw(9)--(13)--(14);
  \node (a) at (0.5,-1.5){};
   \node (b) at (6.2,-1.5){};
   \node[anchor=south] at (a) {$G$}; 
\node[anchor=south] at (b) {$G'$}; 
\end{tikzpicture} 
\caption{Eccentric subgraph number of $3$}\label{fig2}
\end{figure}
\begin{example}
Consider the graph $G$ in Figure $\ref{fig2}$, where $e=\{2,3\}$  is a cut edge. Then $\epsilon_{G}(3)=f_{G}(3,7)=15$. We obtain $G'$ by identifying  vertices $2$ and $3$ and attaching pendant vertex $3$ at identified vertex $2$. Then $\epsilon_{G'}(3)=f_{G'}(3,7)=10$. So, $\epsilon_{G'}(3)<\epsilon_{G}(3)$.
\end{example}
\begin{lemma}\label{lem2}
Let $r$ be a pendant vertex of a tree $T$ and let $v_1\in V(T)$. Suppose $P:v_1v_2\cdots v_dr$ is the $v_1$-$r$ path in $T$. Then $f_T(v_1)-2f_T(v_1,r)\geq 0$ and the equality happens if and only if $d=1$. 
\end{lemma}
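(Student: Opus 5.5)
Let $T$ be the tree, $r$ the pendant vertex, and $P:v_1v_2\cdots v_dr$ the $v_1$-$r$ path, so that $d(v_1,r)=d$ and $v_d$ is the unique neighbour of $r$. The plan is to split the subtrees containing $v_1$ according to how far they reach along $P$, and compare that count with $2f_T(v_1,r)$.

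For $1\le k\le d$, let $T_k$ be the component of $T-\{v_kv_{k+1}\}$ containing $v_k$, where $v_{d+1}=r$. Set $g_k=f_{T_k}(v_k)$ (here $T_k$ omits the path beyond $v_k$), and let $h_k$ be the number of subtrees containing $v_k$ that use only $v_k$ and vertices hanging off $v_k$ (not on $P$). Then:

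\begin{enumerate}
\item[(a)] A subtree containing both $v_1$ and $r$ contains all of $P$. Its restriction to the branches hanging off each $v_k$ is chosen independently, so
\[
f_T(v_1,r)=\prod_{k=1}^{d} h_k .
\]
Note that $r$ is pendant, so nothing hangs off $r$.

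\item[(b)] A subtree containing $v_1$ either reaches $r$, or stops at some $v_j$ with $1\le j\le d$, meaning it contains $v_1,\dots,v_j$ but not $v_{j+1}$. This gives
\[
f_T(v_1)=f_T(v_1,r)+\sum_{j=1}^{d}\Big(\prod_{k=1}^{j} h_k\Big)\cdot(\text{count of choices off the path}),
\]
where for $j<d$ the count is simply $\prod_{k\le j}h_k$.

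\item[(c)] The $j=d$ term is exactly $\prod_{k=1}^d h_k=f_T(v_1,r)$, namely the subtrees containing $v_1,\dots,v_d$ but not $r$. Hence
\[
f_T(v_1)=2f_T(v_1,r)+\sum_{j=1}^{d-1}\prod_{k=1}^{j}h_k .
\]
\end{enumerate}

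Each $h_k\ge 1$, so the sum in (c) is nonnegative, which gives $f_T(v_1)-2f_T(v_1,r)\ge 0$. The sum is empty precisely when $d=1$, and otherwise it is at least $1$. This yields the equality case: equality holds if and only if $d=1$.

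The only step needing care is verifying the decomposition by "last path vertex reached" and the product structure, which rests on $T$ being a tree. Apart from that, the argument is routine, and there is no real obstacle.
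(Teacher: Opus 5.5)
Your proof is correct and is essentially the paper's argument: both split the subtrees containing $v_1$ according to the last vertex of $P$ they reach, with your $h_k$ playing the role of the paper's $f_{X_k}(v_k)$, and both obtain $f_T(v_1)=2f_T(v_1,r)+\sum_{j=1}^{d-1}\prod_{k=1}^{j}h_k$, from which the inequality and the equality case follow. One cosmetic point: step (b) is worded confusingly (read literally, it multiplies $\prod_{k\le j}h_k$ by a further count equal to that same product, whereas the $j$-th term should just be $\prod_{k\le j}h_k$), and $T_k$, $g_k$ are never used; the identity you state in (c) is nevertheless the correct one.
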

\begin{proof}
Let $X_1$ be the component of $T-v_1v_2$ containing $v_1$, $X_2$ be the component of  $T-v_1v_2-v_2v_3$ containing $v_2$, $\ldots$ , $X_d$ be the component of  $T-v_{d-1}v_d-v_dr$ containing $v_d$ (see Figure \ref{fig3}). Then
\begin{align*}
    f_T(v_1,r)=\prod_{i=1}^{d} f_{X_i}(v_i),\;\; f_T(v_1,v_d,\overline{r})=\prod_{i=1}^{d} f_{X_i}(v_i)
\end{align*} 
and for $1\leq j \leq d-1$
\begin{align*}
   f_T(v_1,v_j,\overline{v_{j+1}})= \prod_{i=1}^{j} f_{X_i}(v_i).
\end{align*}
So,
\begin{align*}
 f_T(v_1)&=f_{X_1}(v_1)+f_{X_1}(v_1)f_{X_2}(v_2)+\cdots+2f_{X_1}(v_1)f_{X_2}(v_2)\cdots f_{X_d}(v_d)\\
         &=f_{X_1}(v_1)+f_{X_1}(v_1)f_{X_2}(v_2)+\cdots+2f_T(v_1,r).
\end{align*} 
  Thus, $f_T(v_1)-2f_T(v_1,r)\geq 0$ and equality happens if and only if $f_{X_1}(v_1)+f_{X_1}(v_1)f_{X_2}(v_2)+\cdots+f_{X_1}(v_1)f_{X_2}(v_2)\cdots f_{X_{d-1}}(v_{d-1})=0$ and hence equality happens if and only if $d=1.$
\end{proof}
\begin{figure}[h!]
    \centering
 \begin{tikzpicture}[scale=1.0, every node/.style={scale=0.8}]
    \node[draw,circle,fill=black, inner sep=1.5pt, label=above:\(v_1\)](m)at (1.5,0){};
    \node[draw,circle,fill=black, inner sep=1.5pt, label=above:\(v_2\)](m1)at (4.5,0){};
    \node[draw,circle,fill=black, inner sep=1.5pt, label=above:\(v_d\)](m2)at (7.5,0){};
    \node[draw,circle,fill=black, inner sep=1.5pt, label=above:\(r\)](r)at (9,0){};
    \draw(m)--(m1);
    \draw[dashed](m1)--(m2);
    \draw(m2)--(r);
    \draw(1.5,0)--(1,-1)--(2,-1)--cycle;
    \node at (1.5,-0.8){\(X_1\)};
    \draw(4.5,0)--(4,-1)--(5,-1)--cycle;
    \node at (4.5,-0.8){\(X_2\)};
    \draw(7.5,0)--(7,-1)--(8,-1)--cycle;
    \node at (7.5,-0.8){\(X_d\)};
    \end{tikzpicture}
    \caption{The tree T of Lemma \ref{lem2}}\label{fig3}
    
\end{figure}

\begin{lemma}\label{lem3}
Let $G$ and $G'$ be the graphs mentioned in Lemma \ref{lem1} (Figure \ref{fig1}). Suppose  $\epsilon_{G'}(v_2)=f_{G'}(v_2,r)$. Then the following hold:
\begin{enumerate}
    \item[(i)] $\epsilon_{G'}(v_1)=min\{f_{G'}(v_1,v_2),f_{G'}(v_1,r)\}$;
    \item[(ii)] If $G_1$ is a tree then $\epsilon_{G'}(v_1)=f_{G'}(v_1,r)$. 
\end{enumerate}
\end{lemma}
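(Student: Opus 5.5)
The plan is to compare, in $G'$, the subgraph counts seen from $v_2$ with those seen from $v_1$, using the fact that $v_2$ is a pendant vertex attached at $v_1$. Part (i) then comes from comparing two minima; part (ii) needs one extra inequality, which is where Lemma \ref{lem2} enters.

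\textbf{Step 1 (transfer between $v_2$ and $v_1$).} Every connected subgraph of $G'$ that contains $v_2$ and some vertex $x\neq v_2$ must contain the edge $v_1v_2$, and therefore $v_1$. Deleting $v_2$ from such a subgraph gives a connected subgraph containing $v_1$ and $x$. Conversely, adding $v_2$ and the edge $v_1v_2$ to a connected subgraph containing $v_1$ and $x$ gives one containing $v_2$ and $x$, and the two operations are inverse. Hence
\[
f_{G'}(v_2,x)=f_{G'}(v_1,x,\overline{v_2})\quad\text{for } x\notin\{v_1,v_2\}, \qquad f_{G'}(v_2,v_1)=f_{G'}(v_1,v_2).
\]
For $x\notin\{v_1,v_2\}$ I will also check directly that $f_{G'}(v_1,x)=2f_{G'}(v_1,x,\overline{v_2})$, so $f_{G'}(v_2,x)\le f_{G'}(v_1,x)$. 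This step is routine.

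\textbf{Step 2 (part (i)).} Since $\epsilon_{G'}(v_1)\le f_{G'}(v_1,y)$ for every $y$, we have $\epsilon_{G'}(v_1)\le\min\{f_{G'}(v_1,v_2),f_{G'}(v_1,r)\}$. For the reverse inequality, let $\epsilon_{G'}(v_1)=f_{G'}(v_1,s)$.
\begin{itemize}
\item If $s=v_2$, there is nothing to prove.
\item If $s=v_1$, then $f_{G'}(v_1)\ge f_{G'}(v_1,v_2)$, so the minimum is attained at $v_2$ and we are again done.
\item If $s\notin\{v_1,v_2\}$, then by Step 1 and minimality of $r$,
\[
f_{G'}(v_2,r)\le f_{G'}(v_2,s)\le f_{G'}(v_1,s)=\epsilon_{G'}(v_1).
\]
It remains to bound $f_{G'}(v_1,r)$ or $f_{G'}(v_1,v_2)$ by $f_{G'}(v_1,s)$. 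Note first that $r\neq v_2$, since $f_{G'}(v_2)\ge f_{G'}(v_2,v_1)$. If $r=v_1$, then $f_{G'}(v_2,r)=f_{G'}(v_1,v_2)$ and the chain above finishes the argument.
\end{itemize}
The case $r,s\notin\{v_1,v_2\}$ is the delicate point. There I plan to use the cut-vertex factorization at $v_1$ to show that $f_{G'}(v_1,x)$ and $f_{G'}(v_1,x,\overline{v_2})$ differ by the fixed factor $2$ for all such $x$. Then minimizing $f_{G'}(v_2,\cdot)$ over $x\notin\{v_1,v_2\}$ is the same as minimizing $f_{G'}(v_1,\cdot)$ over those $x$, which gives $f_{G'}(v_1,r)\le f_{G'}(v_1,s)$.

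\textbf{Step 3 (part (ii)).} By (i), it suffices to show $f_{G'}(v_1,r)\le f_{G'}(v_1,v_2)$, or at least that the minimum is attained at some vertex other than $v_2$ and $v_1$. Since $G_1$ is a tree with at least two vertices, pick a pendant vertex $w\ne v_1$ of $G_1$. Because $v_1$ is a cut vertex of $G'$ separating $G_1$ from $G'_2$,
\[
f_{G'}(v_1,w)=f_{G_1}(v_1,w)\,f_{G'_2}(v_1), \qquad f_{G'}(v_1,v_2)=f_{G_1}(v_1)\,f_{G'_2}(v_1,v_2)=\tfrac12 f_{G_1}(v_1)\,f_{G'_2}(v_1),
\]
where the last equality holds because $v_2$ is pendant in $G'_2$. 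Lemma \ref{lem2}, applied to the tree $G_1$ and its pendant vertex $w$, gives $f_{G_1}(v_1)\ge 2f_{G_1}(v_1,w)$. Hence $f_{G'}(v_1,v_2)\ge f_{G'}(v_1,w)$, and in particular $\epsilon_{G'}(v_1)$ is attained at a vertex of $G_1\cup G'_2$ other than $v_2$. Combining this with Step 2 yields $\epsilon_{G'}(v_1)=f_{G'}(v_1,r)$.

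\textbf{Main obstacle.} The hardest step is the minimizer bookkeeping. If $r=v_1$ were allowed, then $f_{G'}(v_1,r)=f_{G'}(v_1)$, which is not minimal, so (ii) must be read with $r$ chosen among the minimizers of $f_{G'}(v_2,\cdot)$ other than $v_1$. The inequality $f_{G'}(v_1,v_2)\ge f_{G'}(v_1,w)$ from Step 3 is exactly what guarantees that such a choice exists.
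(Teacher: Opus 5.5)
Your proof is correct and is essentially the paper's argument. The identity $f_{G'}(v_1,y)=2f_{G'}(v_2,y)$ for $y\notin\{v_1,v_2\}$ transfers the minimizer $r$ from $v_2$ to $v_1$. Then Lemma~\ref{lem2}, applied to a pendant vertex $w\neq v_1$ of the tree $G_1$, gives $f_{G'}(v_1,v_2)\ge f_{G'}(v_1,w)\ge f_{G'}(v_1,r)$. The only difference is that you get the factor $2$ directly from the pendant bijection, whereas the paper splits into the cases $y\in V(G_1)$ and $y\in V(G'_2)$.

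Your worry about $r=v_1$ is vacuous. For every $x\notin\{v_1,v_2\}$ we have $f_{G'}(v_2,x)<f_{G'}(v_2,v_1)$, because the single-edge subgraph $v_1v_2$ is counted only on the right. So $r\notin\{v_1,v_2\}$ automatically, and (ii) needs no reinterpretation.
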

\begin{proof} 
\begin{enumerate}
\item[(i)] Let $G_1$ and $G'_2$ be the subgraphs  of $G'$ such that $G_1\cup G'_2=G'$ and $V(G_1)\cap V(G'_2)=\{v_1\}$. Clearly $r\neq v_2$. Suppose $r\in V(G'_2)\setminus v_2$. 
    
If $y\in V(G_1)$ then $\epsilon_{G'}(v_2)=f_{G'}(v_2,r)$ implies $f_{G'_2}(v_2,r)f_{G_1}(v_1)\leq f_{G'_2}(v_2,v_1)f_{G_1}(v_1,y)$. Since $\frac{f_{G'_2}(v_1,r)}{2}=f_{G'_2}(v_2,r)$ and $\frac{f_{G'_2}(v_1)}{2}=f_{G'_2}(v_2,v_1)$,  $\frac{f_{G'_2}(v_1,r)}{2}f_{G_1}(v_1)\leq \frac{f_{G'_2}(v_1)}{2}f_{G_1}(v_1,y)$. So,
\begin{align}\label{a1}
  f_{G'}(v_1,r) =f_{G'_2}(v_1,r)f_{G_1}(v_1)\leq f_{G'_2}(v_1)f_{G_1}(v_1,y)=f_{G'}(v_1,y).
\end{align}
If $y\in V(G'_2)\setminus v_2$ then $f_{G'_2}(v_2, r)\leq f_{G'_2}(v_2,y)$. Since $\frac{f_{G'_2}(v_1,r)}{2}=f_{G'_2}(v_2,r)$ and $\frac{f_{G'_2}(v_1,y)}{2}=f_{G'_2}(v_2,y)$, we have  $\frac{f_{G'_2}(v_1,r)}{2}f_{G_1}(v_1)\leq \frac{f_{G'_2}(v_1,y)}{2}f_{G_1}(v_1)$. Thus
\begin{align}\label{a2}
   f_{G'}(v_1,r)=f_{G'_2}(v_1,r)f_{G_1}(v_1)\leq f_{G'_2}(v_1,y)f_{G_1}(v_1)=f_{G'}(v_1,y). 
\end{align}
By $(\ref{a1})$ and $(\ref{a2})$ we get $\epsilon_{G'}(v_1)=min\{f_{G'}(v_1,v_2),f_{G'}(v_1,r)\}$ if $r\in V(G'_2)\setminus v_2$. Similarly, we can prove that $\epsilon_{G'}(v_1)=min\{f_{G'}(v_1,v_2),f_{G'}(v_1,r)\}$ if $r\in V(G_1)$. This proves the result.

\item [(ii)]  Let $x$  be a pendant vertex of $G_1$. Then $f_{G'}(v_1,x)=f_{G_1}(v_1,x)f_{G'_2}(v_1)$ and $f_{G'}(v_1,v_2)=f_{G_1}(v_1)f_{G'_2}(v_1,v_2)$. Since $f_{G'_2}(v_1)=2f_{G_2}(v_2)$ and $f_{G'_2}(v_1,v_2)=f_{G_2}(v_2)$, we have
\begin{align*}
  f_{G'}(v_1,v_2)-f_{G'}(v_1,x)&=f_{G_1}(v_1)f_{G_2}(v_2)-2f_{G_1}(v_1,x)f_{G_2}(v_2)\\&=(f_{G_1}(v_1)-2f_{G_1}(v_1,x))f_{G_2}(v_2)\\& \geq 0 \;\; (\mbox{by Lemma $\ref{lem2}).$}
\end{align*}
Thus $f_{G'}(v_1,r)\leq f_{G'}(v_1,x)\leq f_{G'}(v_1,v_2)$ where the first inequality follows from $(\ref{a1})$ and hence from part $(i)$ the result follows.
\end{enumerate} 
\end{proof}

\begin{theorem}\label{thm2}
Let $G$ and $G'$ be the graphs mentioned in Lemma \ref{lem1} (Figure \ref{fig1}). If $G_1$ is a tree then $ES(G)<ES(G')$.
\end{theorem}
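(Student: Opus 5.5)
The plan is to use Lemma \ref{lem1} for every vertex except $v_2$, and then to compensate for the possible loss at $v_2$ (which the Example shows can occur) by the gain at $v_1$. Concretely, Lemma \ref{lem1} gives $\epsilon_G(x)<\epsilon_{G'}(x)$ for all $x\in V(G)-v_2$. This includes $v_1$ and at least one further vertex of $G_1$, since $|V(G_1)|\geq 2$. So it suffices to prove the paired inequality
\[
\epsilon_G(v_1)+\epsilon_G(v_2)\leq \epsilon_{G'}(v_1)+\epsilon_{G'}(v_2).
\]
Summing this with the strict inequalities of Lemma \ref{lem1} over the remaining vertices then yields $ES(G)<ES(G')$.

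First I compute the right-hand side. Let $\epsilon_{G'}(v_2)=f_{G'}(v_2,r)$; clearly $r\neq v_2$, and $r\neq v_1$ since $f_{G'}(v_2,v_1)=f_{G'}(v_2)$ is maximal. Because $G_1$ is a tree, Lemma \ref{lem3}(ii) gives $\epsilon_{G'}(v_1)=f_{G'}(v_1,r)$. Since $v_2$ is a pendant vertex at $v_1$ in $G'$, the connected subgraphs containing $v_1$ and $r$ split into those avoiding $v_2$ and those containing $v_2$. These two families are in bijection (add or remove the pendant edge), so $f_{G'}(v_1,r)=2f_{G'}(v_2,r)$. Hence the right-hand side equals $3f_{G'}(v_2,r)$.

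Next I bound the left-hand side by choosing the same $r$ as the test vertex: $\epsilon_G(v_1)+\epsilon_G(v_2)\leq f_G(v_1,r)+f_G(v_2,r)$. I split into two cases.
\begin{itemize}
\item If $r\in V(G_1)$, then $f_G(v_1,r)=f_G(v_2,r)=f_{G_1}(v_1,r)f_{G_2}(v_2)$. Also $f_{G'}(v_2,r)=f_{G_1}(v_1,r)f_{G'_2}(v_1,v_2)=f_{G_1}(v_1,r)f_{G_2}(v_2)$. So the left-hand side is at most $2f_{G'}(v_2,r)$.
\item If $r\in V(G'_2)\setminus\{v_1,v_2\}$ (equivalently, $r\in V(G_2)\setminus v_2$ in $G$), then $f_{G'}(v_2,r)=f_{G_1}(v_1)f_{G_2}(v_2,r)$. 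In $G$ we have $f_G(v1,r)=f_{G_1}(v_1)f_{G_2}(v_2,r)=f_{G'}(v_2,r)$ and $f_G(v_2,r)=f_{G_2}(v_2,r)\bigl(1+f_{G_1}(v_1)\bigr)\leq 2f_{G'}(v_2,r)$. So the left-hand side is at most $3f_{G'}(v_2,r)$.
\end{itemize}
In both cases the paired inequality holds, which completes the argument.

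The main obstacle I anticipate is the bookkeeping in the second case. There one must correctly identify $f_{G'_2}(v_1,r)$ with $2f_{G_2}(v_2,r)$ and $f_{G'}(v_2,r)$ with $f_{G_1}(v_1)f_{G_2}(v_2,r)$ under the identification of $v_1$ and $v_2$. The tree hypothesis enters only through Lemma \ref{lem3}(ii), which guarantees that $v_1$ and $v_2$ share the same minimizing vertex $r$. The strictness of the final inequality does not come from the pair $\{v_1,v_2\}$, which may be tight. It comes from the other vertices of $G_1$, via Lemma \ref{lem1}.
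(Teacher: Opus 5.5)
Your argument follows the paper's proof: Lemma \ref{lem1} handles every vertex except $v_2$, and the possible loss at $v_2$ is offset at $v_1$ through Lemma \ref{lem3}(ii), with the same test vertex $r$ for both. Your second case ($r\in V(G'_2)\setminus\{v_1,v_2\}$) is exactly the paper's Case~II.

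Your first case, however, contains a false identity. For $r\in V(G_1)\setminus\{v_1\}$, a connected subgraph of $G$ containing $v_1$ and $r$ need not use the cut edge $e$. So $f_G(v_1,r)=f_{G_1}(v_1,r)\bigl(1+f_{G_2}(v_2)\bigr)$, not $f_{G_1}(v_1,r)f_{G_2}(v_2)$. For example, take the path $a v_1 v_2 b$ with $G_1$ the edge $av_1$, $G_2$ the edge $v_2b$, and $r=a$: the two expressions give $3$ and $2$. Hence the bound ``at most $2f_{G'}(v_2,r)$'' does not follow from your computation.

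The case is easily repaired, in either of two ways:
\begin{enumerate}
\item With the correct count, $f_G(v_1,r)+f_G(v_2,r)=f_{G_1}(v_1,r)\bigl(1+2f_{G_2}(v_2)\bigr)\leq 3f_{G_1}(v_1,r)f_{G_2}(v_2)=3f_{G'}(v_2,r)$, which is still enough for your paired inequality.
\item More simply, as the paper does: $\epsilon_G(v_2)\leq f_G(v_2,r)=f_{G'}(v_2,r)=\epsilon_{G'}(v_2)$, and Lemma \ref{lem1} already gives $\epsilon_G(v_1)<\epsilon_{G'}(v_1)$. No pairing is needed in this case.
\end{enumerate}

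A minor point: $f_{G'}(v_2,v_1)=f_{G'}(v_2)-1$, not $f_{G'}(v_2)$. This does not affect your conclusion that $r\neq v_1$.
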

\begin{proof}
Let $G_1$ and $G'_2$ be the subgraphs  of $G'$ such that $G_1\cup G'_2=G'$ and $V(G_1)\cap V(G'_2)=\{v_1\}$. Suppose $\epsilon_{G'}(v_2)=f_{G'}(v_2,s)$. We have two cases here.\\
{\bf Case-I:} $s\in V(G_1)$\\
In this case, we have 
\begin{align*} 
 \epsilon_{G}(v_2)\leq f_{G}(v_2,s)&=f_{G_2}(v_2)f_{G_1}(v_1,s)\\
                                   &=f_{G'_2}(v_2,v_1)f_{G_1}(v_1,s)\\
                                   &=f_{G'}(v_2,s)\\
                                   &=\epsilon_{G'}(v_2).
\end{align*}
 Now the result follows from Lemma \ref{lem1}.
 
\noindent {\bf Case-II:} $s\in V(G'_2)\setminus v_2$ \\
In this case,
$f_{G'}(v_2,s)=f_{G'_2}(v_2,s)f_{G_1}(v_1)=f_{G_2}(v_2,s)f_{G_1}(v_1)$, $f_{G'}(v_1,s)=f_{G'_2}(v_1,s)f_{G_1}(v_1)=2f_{G_2}(v_2,s)f_{G_1}(v_1)$,
  $f_{G}(v_2,s)=f_{G_2}(v_2,s)(1+f_{G_1}(v_1))$    and  $f_{G}(v_1,s)=f_{G_2}(v_2,s)f_{G_1}(v_1).$ Thus
\begin{align*}
   &f_{G}(v_1,s)+f_{G}(v_2,s)= f_{G_2}(v_2,s)(1+2f_{G_1}(v_1)),\\
   &f_{G'}(v_1,s)+f_{G'}(v_2,s)=3f_{G_2}(v_2,s)f_{G_1}(v_1).
\end{align*}
Since $G_1$ is a tree,  by Lemma $\ref{lem3}$,  we have $\epsilon_{G'}(v_1)=f_{G'}(v_1,s)$. Hence,
\begin{align*}
    \epsilon_{G}(v_{1})+\epsilon_{G}(v_2)\leq f_{G}(v_2,s)+f_{G}(v_1,s) <f_{G'}(v_1,s)+f_{G'}(v_2,s)= \epsilon_{G'}(v_1)+\epsilon_{G'}(v_2),
\end{align*}
and the result follows from Lemma $\ref{lem1}$.   
\end{proof}
\begin{corollary}
Among all trees on $n$ vertices, the star $K_{1,n-1}$ has the maximum eccentric subgraph index.
\end{corollary}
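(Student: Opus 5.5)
The plan is to apply Theorem \ref{thm2} repeatedly to strictly increase $ES$ until the tree becomes a star. Let $T$ be a tree on $n$ vertices that is not a star; since every tree on at most $3$ vertices is a star, we have $n\geq 4$. Then $T$ has diameter at least $3$, so it contains an edge $e=\{v_1,v_2\}$ with neither endpoint a pendant vertex. Concretely, if $u_0u_1\cdots u_d$ is a longest path with $d\geq 3$, take $e=\{u_1,u_2\}$.

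Deleting $e$ leaves two components $G_1\ni v_1$ and $G_2\ni v_2$. Because $v_1$ and $v_2$ are not pendant, each component has at least two vertices, and $e$ is a cut edge since $T$ is a tree. So $T$ satisfies the hypotheses of Lemma \ref{lem1}, and $G_1$ is a tree because it is a subtree of $T$. Let $T'$ be the graph obtained by identifying $v_1$ and $v_2$ and attaching a pendant vertex $v_2$ at the identified vertex. Then $T'$ is again a tree on $n$ vertices: the vertex count is preserved and the edge count stays $n-1$, since contracting $e$ removes one edge and the new pendant edge adds one back. By Theorem \ref{thm2}, $ES(T)<ES(T')$.

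To make the iteration terminate, note that the number of pendant vertices strictly increases. In $T'$, every pendant vertex of $T$ is still pendant, because neither $v_1$ nor $v_2$ was pendant and the identified vertex has degree $\deg v_1+\deg v_2-1\geq 3$. In addition, the new vertex $v_2$ is pendant. So the pendant count rises by exactly one at each step, and it is bounded above by $n-1$. Hence the process stops after finitely many steps, and it can only stop at a tree with no edge joining two non-pendant vertices, which is a tree of diameter at most $2$, i.e. the star $K_{1,n-1}$.

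The chain of strict inequalities then gives $ES(T)<ES(K_{1,n-1})$ for every non-star tree $T$ on $n$ vertices, so the star is the unique maximizer. This also recovers the maximization half of Theorem \ref{tree}. The only point that needs care is verifying that both components have at least $2$ vertices, and that is guaranteed by the choice of an internal edge of a longest path.
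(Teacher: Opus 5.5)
Your proof is correct and follows the paper's approach. The paper states this corollary right after Theorem~\ref{thm2} with no written proof, as a consequence of applying that transformation repeatedly; the same iteration is spelled out in Lemmas~\ref{lem5} and~\ref{lem9}. Your choice of an internal edge of a longest path and your pendant-count termination argument supply details the paper leaves implicit.
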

We believe that the result in Theorem \ref{thm2} is true for any connected graphs $G_1$ and $G_2$ and hence we are proposing the following conjecture.
\begin{conjecture}
Let $G$ be a graph with a cut edge $e=\{v_1,v_2\}$. Let $G_1$ and $G_2$ be the  connected components of $G-e$ containing $v_1$ and $v_2$, respectively, with 
$|V(G_i)|\geq 2$ for $i=1,2$. Let $G'$ be the graph obtained from $G$ by identifying $v_1$ and $v_2$ and adding a pendant  vertex $v_2$  at the identified vertex  $v_{1}$. Then $ES(G)< ES(G').$
\end{conjecture} 
 
\section{Unicyclic graphs}\label{ucg}

Let $n\geq 3$ and $3\leq g\leq n$. We denote the set of all unicyclic graphs on $n$ vertices by $\mathcal{U}_n$ and the set of all unicyclic graphs on $n$  vertices with girth $g$ by $\mathcal{U}_{n,g}$. We obtain the graph $U^p_{n,g}$  by attaching $n-g$ pendant vertices at a vertex of the cycle $C_g$ (see Figure \ref{fig4}). Clearly $U^p_{n,g}\in \mathcal{U}_{n,g}$. We start this section by determining $ES(C_n)$.
\begin{figure}[h!]
    \centering
  \begin{tikzpicture}[scale=1.2]
  \draw(-11,0) circle(0.7cm);
  \node[draw,circle,fill=black,scale=0.2](1)at (-10.32,0.2){};
  \node[draw,circle,fill=black,scale=0.2] (a) at (-9.5,0.8){};
  \node[draw,circle,fill=black,scale=0.2] (b) at (-9.4, 0.6){};
  \node[draw,circle,fill=black,scale=0.2] (c) at (-9.3, 0.4){};
  \node[draw,circle,fill=black,scale=0.2] (d) at (-9.35,-0.1){};
   \node (2) at (-10.7,0.0){};
  \draw(1)--(a);
  \draw(1)--(b);
  \draw(1)--(c);
  \draw(1)--(d);
  \draw[dashed](c) to [<->, out=-65, in=60](d);
  \node [anchor=center] at (-8.8, 0.4) {\rotatebox{90}{$\underbrace{\hspace{1.5cm}}_{n-g}$}};
  \node[anchor=east] at (2) {$C_g$};
 \end{tikzpicture}
  \caption{The  graph $U^p_{n,g}$}\label{fig4}
 \end{figure}

\begin{proposition}\label{prop1}
Let $C_n:v_1v_2 \cdots v_nv_1$ be the cycle on $n\geq 3$ vertices. Then $\epsilon_{C_n}(v_i)=\lfloor \frac{n^2+2n+5}{4}\rfloor$ for $i=1,2,\ldots,n$. Furthermore, $ES(C_n)=n\lfloor \frac{n^2+2n+5}{4}\rfloor$.
\end{proposition}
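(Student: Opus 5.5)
By the rotational symmetry of $C_n$, all $\epsilon_{C_n}(v_i)$ are equal, so it suffices to compute $\epsilon_{C_n}(v_1)=\min_k f_{C_n}(v_1,v_k)$. The statement $ES(C_n)=n\lfloor \frac{n^2+2n+5}{4}\rfloor$ then follows immediately. The plan is to get a closed formula for $f_{C_n}(v_1,v_k)$ in terms of $d=k-1$, the length of the clockwise arc from $v_1$ to $v_k$, and then minimize over $d$.

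\textbf{Classifying the subgraphs.} A connected subgraph of $C_n$ is either $C_n$ itself or a path, i.e.\ a set of consecutive edges of size $m\le n-1$ (a single vertex when $m=0$). Fix $v_1\neq v_k$, so $1\le d\le n-1$. A path containing both $v_1$ and $v_k$ must contain either the clockwise $v_1$–$v_k$ segment ($d$ edges) or the counterclockwise one ($n-d$ edges). It cannot contain both, since their union is all $n$ edges and a path has at most $n-1$ edges. So the two families are disjoint, and the full cycle is counted separately.

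\textbf{Counting each family.} A path containing the fixed $d$-edge segment is determined by extending it $a$ edges at one end and $b$ at the other, with $a,b\ge 0$ and $a+b\le n-1-d$. This gives
\[\sum_{t=0}^{n-1-d}(t+1)=\frac{(n-d)(n-d+1)}{2}\]
such paths. Symmetrically, the other family has $\frac{d(d+1)}{2}$ members. Hence
\[
f_{C_n}(v_1,v_k)=\frac{(n-d)(n-d+1)+d(d+1)}{2}+1 .
\]
Also, $f_{C_n}(v_1,v_1)=f_{C_n}(v_1)$ is larger than every such value, so $u=v_1$ never attains the minimum.

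\textbf{Minimizing over $d$.} The expression is a convex quadratic in $d$, symmetric about $d=n/2$, so the minimum over integers $1\le d\le n-1$ occurs at $d=\lfloor n/2\rfloor$.
\begin{itemize}
\item For $n$ even, the value is $\frac{n}{2}\left(\frac{n}{2}+1\right)+1=\frac{n^2+2n+4}{4}$.
\item For $n$ odd, the value is $\frac{(n+1)^2}{4}+1=\frac{n^2+2n+5}{4}$.
\end{itemize}
Both equal $\lfloor \frac{n^2+2n+5}{4}\rfloor$, and summing over the $n$ vertices gives $ES(C_n)$.

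\textbf{Main obstacle.} The only delicate step is the bookkeeping in the counting. It must use connected subgraphs rather than induced ones. For example, the $n$ spanning paths obtained by deleting one edge are distinct from $C_n$ and are counted among the paths. The two families must also be shown to be disjoint, as done above, so that the full cycle is counted exactly once.
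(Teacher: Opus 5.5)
Your proof is correct and follows the paper's route: you classify the connected subgraphs of $C_n$ as $C_n$ itself or paths, get a closed form for $f_{C_n}(v_1,v_k)$, and minimize over $k$. The differences are cosmetic. You group the paths by which arc they contain, giving $\binom{n-k+2}{2}+\binom{k}{2}+1$, which equals the paper's $2n-1+\binom{n-k}{2}+\binom{k-2}{2}$ (the paper groups by endpoint status), and you minimize by convexity and symmetry about $d=n/2$ instead of the paper's first difference $f_{C_n}(v_1,v_k)-f_{C_n}(v_1,v_{k+1})=n-2k+1$.
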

\begin{proof}
Since all vertices of $C_n$ have the same eccentric subgraph number,  we will only compute $\epsilon(v_1)=\min\{f_{C_n}(v_1,v_k),\ 2\leq k\leq n\}$. Observe that $C_n$ itself a connected subgraph containing $v_1$ and $v_k$ and  all other connected subgraphs  of $C_n$ containing $v_1$ and $v_k$ are the paths containing $v_1$ and $v_k$. There are exactly two paths of $C_n$ containing both $v_1$ and $v_k$ as  pendant vertices, one  is $v_1v_2 \cdots v_k$ and another is $v_kv_{k+1} \cdots v_nv_{1}$. Also, we have $2(n-k)+2(k-2)=2(n-2)$ paths of $C_n$ containing both $v_1$ and $v_k$, and either $v_1$ or $v_k$ as a pendant vertex. The number of paths of $C_n$ containing both $v_1$ and $v_k$ as non-pendant vertices is $\binom{n-k}{2}+\binom{k-2}{2}$. Thus $f_{C_n}(v_1,v_k)=3+2(n-2)+\binom{n-k}{2}+\binom{k-2}{2}=2n-1+\binom{n-k}{2}+\binom{k-2}{2}$. So,
\begin{align*}
  f_{C_n}(v_1,v_k)- f_{C_n}(v_1,v_{k+1})&=\left[\binom{n-k}{2}-\binom{n-k-1}{2}\right]-\left[\binom{k-1}{2}-\binom{k-2}{2}\right] \\
   &=\binom{n-k-1}{1}-\binom{k-2}{1} \\
   &=n-2k+1.
\end{align*}
Hence, $f_{C_n}(v_1,v_k)> f_{C_n}(v_1,v_{k+1})$  if $k<\frac{n+1}{2}$ and $f_{C_n}(v_1,v_k)< f_{C_n}(v_1,v_{k+1})$ if $k>\frac{n+1}{2}$.

Suppose $n$ is even. Then $f_{C_n}(v_1,v_k)> f_{C_n}(v_1,v_{k+1})$  if  $k\leq \frac{n}{2}$ and $f_{C_n}(v_1,v_k)< f_{C_n}(v_1,v_{k+1})$  if    $k\geq \frac{n}{2}+1$. So, $\epsilon_{C_n}(v_1)=f_{C_n}(v_1,v_{\frac{n}{2}+1})=\frac{n^2+2n+4}{4}$.
Suppose $n$ is odd. Then $f_{C_n}(v_1,v_k)> f_{C_n}(v_1,v_{k+1})$  if  $k\leq \frac{n-1}{2}$ and $f_{C_n}(v_1,v_k)< f_{C_n}(v_1,v_{k+1})$  if    $k\geq \frac{n+3}{2}$. So, $\epsilon_{C_n}(v_1)=f_{C_n}(v_1,v_{\frac{n+1}{2}})=f_{C_n}(v_1,v_{\frac{n+3}{2}})=\frac{n^2+2n+5}{4}$.  Thus,  $\epsilon_{C_n}(v_i)=\lfloor \frac{n^2+2n+5}{4}\rfloor$ for $i=1,2,\ldots,n$ and hence $ES(C_n)=n\lfloor \frac{n^2+2n+5}{4}\rfloor$.
\end{proof}

The next result tells about the number of connected subgraphs of $C_n$ containing a fixed vertex.
\begin{lemma}[\cite{Pandey4}, Lemma 3.1]\label{lem4}
Let $C_n:v_1v_2 \cdots v_nv_1$ be the cycle on $n\geq 3$ vertices. Then $f_{C_n}(v_i)= 2n +\binom{n-1}{2}$ for $i=1,2,\ldots,n.$
\end{lemma}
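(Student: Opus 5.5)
The statement to prove is Lemma \ref{lem4}, the formula $f_{C_n}(v_i)=2n+\binom{n-1}{2}$. By the symmetry of the cycle it suffices to treat $v_1$, and I will count directly, in the spirit of the proof of Proposition \ref{prop1}. Every connected subgraph of $C_n$ is one of two kinds: the whole cycle $C_n$, or a path. A path here means a single vertex or a connected proper subgraph, which is automatically a path along the cycle. So the plan is to split the count as $1$ (for $C_n$ itself) plus the number of paths of $C_n$ that contain $v_1$, and then count those paths by their position relative to $v_1$.

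I sort the paths containing $v_1$ according to whether $v_1$ is an endpoint.
\begin{enumerate}
\item[(a)] The trivial path $\{v_1\}$ contributes $1$.
\item[(b)] Nontrivial paths with $v_1$ as an endpoint are determined by a direction and a length from $1$ to $n-1$. That gives $2(n-1)$ paths, and paths of different lengths or directions are distinct. One care point is length $n-1$: the path in each direction ends at the other neighbour of $v_1$, but the two paths use different edge sets, so both are counted.
\item[(c)] Paths with $v_1$ as an interior vertex are determined by how far they extend to each side. Take $a\ge 1$ vertices on one side and $b\ge 1$ on the other, with $a+b\le n-1$. The number of such pairs is $\binom{n-1}{2}$; a quick check is $a+b=m$ having $m-1$ solutions, and summing over $m=2,\ldots,n-1$ gives $\binom{n-1}{2}$.
\end{enumerate}

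Adding everything up gives $1+1+2(n-1)+\binom{n-1}{2}=2n+\binom{n-1}{2}$, as claimed.

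The only delicate step is making sure nothing is double-counted or missed at the extremes. The two length-$(n-1)$ paths in (b) must both be kept. The spanning paths are all distinct from $C_n$ because each one omits an edge. In (c), the case $a+b=n-1$ covers exactly the spanning paths in which $v_1$ is interior. I would check these points explicitly, and sanity-check the formula at $n=3$: there are $7$ connected subgraphs of the triangle containing a fixed vertex, and $2\cdot 3+1=7$.
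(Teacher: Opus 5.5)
Your proof is correct. Every connected subgraph of $C_n$ other than $C_n$ itself lies in $C_n-e$ for some edge $e$, so it is a path. Your three classes (the trivial path, paths with $v_1$ as an endpoint, paths with $v_1$ interior) partition the paths through $v_1$, with sizes $1$, $2(n-1)$ and $\binom{n-1}{2}$, and these add up to $2n+\binom{n-1}{2}$.

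The paper does not prove this lemma; it is quoted from the cited reference. Your argument is the same direct count the paper uses for $f_{C_n}(v_1,v_k)$ in the proof of Proposition~\ref{prop1}: the cycle itself plus paths, sorted by whether the marked vertex is an endpoint or interior. It also uses the same convention, counting the $n$ spanning paths separately from $C_n$ (two in your class (b), $n-2$ in class (c)). That convention is what the formula requires; counting only induced connected subgraphs would give $\binom{n}{2}+1$ instead. Your explicit handling of it is therefore the right point to stress.
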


Let $G\in \mathcal{U}_{n,g}$. Then $G$ can be obtained from the cycle $C_g:v_1v_2\cdots v_gv_1$ and the trees $T_1, T_2,\ldots , T_g$ by identifying a vertex of $T_i$ with the vertex $v_i$ of $C_g$, for $i=1,2,\ldots ,g.$ Here  $|V(T_i)|=n_i+1$ with $n_i\geq 0$ for $i=1,2,\ldots, g$ and $g+n_1+\cdots + n_g=n$. We denote the graph $G$ by $U_{n,g}(T_1,T_2,\ldots ,T_g)$. Note that if $T_i$ is a star graph on $n-g+1$ vertices with $v_i$ as center for some $i$, then $U_{n,g}(T_1,T_2,\ldots ,T_g)=U^p_{n,g}.$

\begin{lemma}\label{lem5}
Let $U_{n,g}(T_1,T_2,\dots,T_g)\in \mathcal{U}_{n,g}$ with $|V(T_i)|=n_i+1$ for $i=1,2,\ldots ,g $. Then $ES(U_{n,g}(T_1,T_2,\ldots ,T_g))\leq ES(U_{n,g}(K_{1,n_1},K_{1,n_2},\dots,K_{1,n_g}))$ where $v_i$ is the center of $K_{1,n_i}$ and the equality happens if and only if $T_i\cong K_{1,n_i}$.
\end{lemma}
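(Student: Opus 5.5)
The plan is to transform each tree $T_i$ into the star $K_{1,n_i}$ centered at $v_i$ by a finite sequence of the operations of Lemma \ref{lem1}, and to show that every step strictly increases $ES$. Fix $i$ and suppose $T_i\not\cong K_{1,n_i}$ with center $v_i$. Then some vertex of $T_i$ is at distance at least $2$ from $v_i$. Pick an edge $e=\{a,b\}$ of $T_i$ with $a$ on the $v_i$-side (possibly $a=v_i$) such that the branch beyond $b$ has at least two vertices. Such an edge exists: take a vertex $w$ at maximum distance $\geq 2$ from $v_i$, let $b$ be its parent and $a$ the parent of $b$. This $e$ is a cut edge of $G=U_{n,g}(T_1,\ldots,T_g)$. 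Let $G_2$ be the component of $G-e$ containing $b$; it is a subtree of $T_i$ with at least two vertices. Let $G_1$ be the component containing $a$; it contains the cycle and so has at least $g\geq 3$ vertices.

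The key point is to apply Lemma \ref{lem1} with the roles reversed, so that the contracted side is the \emph{tree} side. That is, take $v_1:=b$ with $G_1:=G_2$ a tree, and $v_2:=a$ with the other side containing the cycle. Then Theorem \ref{thm2} applies directly, since only the side playing the role of $G_1$ must be a tree. It gives $ES(G)<ES(G')$, where $G'$ is obtained by identifying $a$ and $b$ into the vertex $b$ and hanging the pendant vertex $a$ at $b$. The result $G'$ is again in $\mathcal{U}_{n,g}$, but now the cycle vertex $v_i$ has been identified with $b$. So I would instead choose the labeling so that the identified vertex retains the name of the vertex closer to the cycle. The underlying unlabeled graph is what matters, and it equals the graph obtained by contracting $e$ toward $a$ and adding a new pendant vertex at $a$.

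Each such step keeps the graph in $\mathcal{U}_{n,g}$ with the same $n_i$, and strictly decreases the potential $\sum_{x\in V(T_i)} d(x,v_i)$. The contraction reduces the distance of every vertex beyond $b$ by one. The new pendant vertex sits at distance $d(a,v_i)+1=d(b,v_i)$, so overall the potential drops by at least $|V(G_2)|-1\geq 1$. Iterating over all $i$ therefore terminates at $U_{n,g}(K_{1,n_1},\ldots,K_{1,n_g})$ with $ES$ strictly increased whenever some $T_i$ was not a star centered at $v_i$. The equality case follows.

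I expect the main obstacle to be the bookkeeping in matching the hypotheses of Lemma \ref{lem1} and Theorem \ref{thm2}. Those results require both sides to have at least two vertices, and require the tree side to be the one denoted $G_1$; after identification, the surviving vertex on the cycle side must be recognized as $v_i$ up to isomorphism. If this role reversal caused trouble, I would fall back on proving the needed variant directly. That means repeating the argument of Theorem \ref{thm2} for the vertex $v_2$ using Lemma \ref{lem3}(ii), which again needs only the tree side's pendant-vertex inequality from Lemma \ref{lem2}.
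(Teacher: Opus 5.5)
Your proposal is correct and takes the same approach as the paper: repeatedly contract a cut edge inside $T_i$, re-attach a pendant vertex, and invoke Theorem \ref{thm2} at each step until every $T_i$ is a star centered at $v_i$. The paper contracts an edge $\{v_i,v\}$ with $\deg(v)\geq 2$ rather than a deepest edge, which is an inessential difference. Your explicit remark that the construction of $G'$ is symmetric in the two sides, so Theorem \ref{thm2} can be applied with the tree side playing the role of $G_1$, covers a point the paper's proof glosses over (as written, it takes the cycle side as $G_1$).
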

\begin{proof}
If $T_i$ is not a star for some $i$, then the vertex $v_i$ is adjacent to a vertex $v$  of $T_i$ with $deg(v)\geq 2$. Identify the vertices $v$ and $v_i$, and add $v$ as a pendant vertex at the identified vertex  $v_{i}$. Repeat this process until $T_i$ becomes $K_{1,n_i}$ with center $v_i$. By Theorem $\ref{thm2}$, the eccentric subgraph index will increase in each step. Continuing this process, we  get $ES(U_{n,g}(T_1,T_2,\dots,T_g))\leq ES(U_{n,g}(K_{1,n_1},K_{1,n_2},\dots,K_{1,n_g}))$ and equality occurs  if and only if $T_i\cong K_{1,n_i}$ for each $1\leq i\leq g$.
\end{proof}
\begin{lemma}\label{lem6}
Let $U^p_{n,g}\in \mathcal{U}_{n,g}$ with all $n-g$ pendant vertices attached to $v_1$ of $C_g:v_1v_2\cdots v_gv_1$. Suppose $v \in V(U^p_{n,g})$. Then the following hold:
\begin{enumerate}
\renewcommand{\labelenumi}{(\roman{enumi})}
\item If $n-g\geq 2$, then  $\epsilon_{U^p_{n,g}}(v)=f_{U^p_{n,g}}(v,y)$, where $y$ is a pendant vertex.
\item If $n-g=1$ and $z$ is the pendant vertex of $U^p_{n,g}$ then 
  $$\epsilon_{U^p_{n,g}}(v)=\left\{
\begin{array}{ll}
f_{U^p_{n,g}}(v,v_{\lfloor \frac{g+2}{2} \rfloor}), &\text{if $v=z$;}\\
f_{U^p_{n,g}}(v,z),  &\text{otherwise.}\\
\end{array}
\right.$$
\end{enumerate}
\end{lemma}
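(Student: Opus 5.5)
We work directly with explicit counts in $U=U^p_{n,g}$. Write $C_g:v_1v_2\cdots v_gv_1$, $m=n-g$, and $P$ for the set of pendant vertices at $v_1$. A connected subgraph of $U$ containing $v_1$ is a connected subgraph of $C_g$ containing $v_1$ together with an arbitrary subset of $P$. By Lemma~\ref{lem4}, there are $F=f_{C_g}(v_1)=2g+\binom{g-1}{2}$ choices for the cycle part. The basic formulas are therefore
\begin{align*}
f_U(v_1,y)&=2^{m-1}F \quad (y\in P), &
f_U(v_1,v_k)&=2^{m}f_{C_g}(v_1,v_k),\\
f_U(v_k,y)&=2^{m-1}f_{C_g}(v_k,v_1), &
f_U(y,y')&=2^{m-2}F \quad (y\ne y'\in P),\\
f_U(v_j,v_k)&=f_{C_g}(v_j,v_k,\overline{v_1})+2^{m}f_{C_g}(v_j,v_k,v_1), & &
\end{align*}
and, for the pendant $z$ itself, $f_U(z,w)=f_U(v_1,w)$ computed in $U-z$. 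The cycle quantities are explicit from the proof of Proposition~\ref{prop1}: $f_{C_g}(v_1,v_k)=2g-1+\binom{g-k}{2}+\binom{k-2}{2}$.

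\textbf{Part (i), $m\ge 2$.} I would treat three types of $v$.

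For $v=v_1$, I need $2^{m-1}F\le 2^m f_{C_g}(v_1,v_k)$ for all $k$, i.e.\ $F\le 2f_{C_g}(v_1,v_k)$. Since $F=f_{C_g}(v_1,v_k)+f_{C_g}(v_1,\overline{v_k})$, this reduces to showing that the number of paths through $v_1$ avoiding $v_k$ is at most $f_{C_g}(v_1,v_k)$. This follows by direct counting: the former is a triangular number of $g-1$ positions minus a few, and the latter contains the minimum $\lfloor (g^2+2g+5)/4\rfloor$. Alternatively, Lemma~\ref{lem2}-type reasoning applies.

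For $v=y\in P$, the pendant vertices $y'\ne y$ give $2^{m-2}F$. Any cycle vertex $v_k$ gives $f_U(y,v_k)=f_U(v_1,v_k)/2\cdot$(correction), which equals $2^{m-1}f_{C_g}(v_1,v_k)\ge 2^{m-2}F$ by the same inequality as above. The vertex $v_1$ gives more.

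For $v=v_j$ with $j\ne 1$, the pendant candidate is $2^{m-1}f_{C_g}(v_j,v_1)$. Comparing with another cycle vertex $v_k$ requires
\[
2^{m-1}f_{C_g}(v_j,v_1)\le f_{C_g}(v_j,v_k,\overline{v_1})+2^m f_{C_g}(v_j,v_k,v_1).
\]
It suffices to show $f_{C_g}(v_j,v_1)\le 2f_{C_g}(v_j,v_k,v_1)$ when $m\ge2$, plus the leftover term. I expect this step to be the main obstacle. The plan is to classify connected subgraphs of $C_g$ containing $v_j,v_1$ as either the whole cycle or paths, and to exhibit an injection from those avoiding $v_k$ into those containing $v_k$. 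The injection extends the path through $v_k$ along the arc on which it lies. A path containing $v_j,v_1$ but avoiding $v_k$ lies in $C_g-v_k$, which is a path. Extending it to reach $v_k$ gives a map that is at most 2-to-1 in the worst case, and the factor $2^m\ge 4$ absorbs this. So I would aim for the weaker bound $f_{C_g}(v_j,v_1,\overline{v_k})\le 2 f_{C_g}(v_j,v_1,v_k)$, which together with $m\ge 2$ yields the inequality. Finally, comparing $v_j$ with $v_1$ itself is trivial, since $f_U(v_j,v_1)=2f_U(v_j,y)$.

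\textbf{Part (ii), $m=1$.} For $v\ne z$, the same comparisons go through with $2^m=2$, but now the leftover term $f_{C_g}(v_j,v_k,\overline{v_1})$ must be used. I would verify this with the explicit binomial formulas, splitting by the positions of $j$ and $k$ relative to $v_1$. For $v=z$, we have $f_U(z,v_k)=f_{C_g}(v_1,v_k)$ and $f_U(z,v_1)=F$. So $\epsilon(z)=\epsilon_{C_g}(v_1)$, which by the monotonicity computed in Proposition~\ref{prop1} is attained at $k=\lfloor (g+2)/2\rfloor$, the antipodal index (either of the two when $g$ is odd).
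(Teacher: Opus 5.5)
There is a genuine gap in part (i), in the comparison of a cycle vertex $v_j$ ($j\neq 1$) with another cycle vertex $v_k$. Put $A=f_{C_g}(v_j,v_1,v_k)$, $B=f_{C_g}(v_j,v_1,\overline{v_k})$ and $L=f_{C_g}(v_j,v_k,\overline{v_1})$. You need $2^{m-1}(A+B)\le L+2^mA$, i.e.\ $2^{m-1}(B-A)\le L$.

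The powers of $2$ appear on both sides in the fixed ratio $2$, so $2^m\ge 4$ absorbs nothing. Your ``weaker bound'' $B\le 2A$ only gives $2^{m-1}(A+B)\le 2^mA+2^{m-1}A$. The excess $2^{m-1}A$ cannot be covered by $L$, which does not grow with $m$. In fact large $m$ is the hardest case, since holding for all $m$ forces $B\le A$. For $m=1$ the term $L$ is not needed once $B\le A$ is known, so you have the difficulty of parts (i) and (ii) reversed.

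Your extension map is also not $2$-to-$1$. Pushing one endpoint of a path out to $v_k$ forgets where that endpoint was. Its fibres therefore have size up to the number of vertices in the corresponding gap plus one.

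The missing ingredient is the inequality you first wrote down and then abandoned, $B\le A$. It is easiest to get by counting rather than by an injection. For $1<j<k$ (the case $j>k$ is symmetric), let $s_1=j-2$, $s_2=k-j-1$, $s_3=g-k$ be the numbers of vertices in the three gaps. Then $B=(s_2+1)(s_3+1)$. $A$ counts the cycle, the $g$ spanning paths, and the paths omitting a nonempty interval of one gap, so $A=g+1+\binom{s_1+1}{2}+\binom{s_2+1}{2}+\binom{s_3+1}{2}$ and
\[
A-B=3+s_1+\binom{s_1+1}{2}+\frac{(s_2-s_3)^2+s_2+s_3}{2}>0 .
\]
This is what the paper's explicit formula records. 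In its indexing ($v_k,v_l$ with $k<l$ in place of your $v_j,v_k$), it shows $f_{U^p_{n,g}}(v_k,v_l)-f_{U^p_{n,g}}(v_k,y)=L+2^{n-g-2}\bigl((g-2l+k+1)^2+(k-1)^2+g+2\bigr)$, where the bracket equals $2(A-B)$.

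The rest of your outline matches the paper: the formulas for $U$, the reduction of the $v_1$ and pendant cases to $f_{C_g}(v_1)\le 2f_{C_g}(v_1,v_k)$, and the treatment of $z$ via Proposition~\ref{prop1}. For that reduction, use $f_{C_g}(v_1,\overline{v_k})=(k-1)(g-k+1)\le g^2/4$, which is below $\lfloor (g^2+2g+5)/4\rfloor$. Bounding it by the triangular number $\binom{g}{2}$ would not suffice. Lemma~\ref{lem2} concerns trees, not $C_g$, so it does not apply here.
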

\begin{proof}
\begin{enumerate}
\item[(i)] Let $x,y$ be two pendant vertices of $U^p_{n,g}$ and let $v_j\neq v_1$ be a vertex on $C_g$. Then  $f_{U^p_{n,g}}(x,y)=2^{n-g-2}f_{C_g}(v_1)$. By Lemma \ref{lem4}, we have  $f_{U^p_{n,g}}(x,y)=2^{n-g-2}\left(2g+\binom{g-1}{2}\right)=2^{n-g-2}\left(\frac{g^2+g+2}{2}\right)$. Also, 
\begin{align*}
 f_{U^p_{n,g}}(x,v_j)=2^{n-g-1}f_{C_g}(v_1,v_j)&=2^{n-g-1}\left(2g-1+\binom{g-j}{2}+\binom{j-2}{2}\right)\\
                                                &=2^{n-g-2}(g^2-2gj+2j^2+3g-4j+4). 
\end{align*}
So,
\begin{align*}
  f_{U^p_{n,g}}(x,v_j)-f_{U^p_{n,g}}(x,y)&= 2^{n-g-3}(g^2-4gj+4j^2+5g+6-8j)\\
                                         &=2^{n-g-3}((g-2j+2)^2+g+2)>0,  
\end{align*}

 and also $f_{U^p_{n,g}}(x,v_j)\leq f_{U^p_{n,g}}(x,v_1)$, we have $\epsilon_{U^p_{n,g}}(x)=f_{U^p_{n,g}}(x,y)$.\\

Now consider the vertex $v_1$. We have $f_{U^p_{n,g}}(v_1,y)=2f_{U^p_{n,g}}(x,y)<2f_{U^p_{n,g}}(y,v_j)=f_{U^p_{n,g}}(v_1,v_j)$. Therefore $f_{U^p_{n,g}}(v_1,y)<f_{U^p_{n,g}}(v_1,v_j)$ for  $2\leq j\leq g$. Thus $\epsilon_{U^p_{n,g}}(v_1)=f_{U^p_{n,g}}(v_1,y)$.\\

Let $v_k$ and $v_l$ be two distinct vertices  of $C_g$ with $2\leq k,l\leq g$. Then 
\begin{align}\label{a3}
f_{U^p_{n,g}}(v_k,y)=2^{n-g-1}f_{C_g}(v_k,v_1)=2^{n-g-2}(g^2-2gk+2k^2+3g-4k+4). 
\end{align}

We have $f_{U^p_{n,g}}(v_k,v_l)=f_{C_g}(v_k,v_l,\overline{v_1})+2^{n-g}f_{C_g}(v_k,v_l,v_1).$ First assume $k<l$. Then $f_{C_g}(v_k,v_l,\overline{v_1})=(k-1)(g-l+1)$
and 
\begin{align*}
f_{C_g}(v_k,v_l,v_1)&=f_{C_g}(v_k,\ldots,v_1,v_g,v_{g-1},\ldots,v_l)+f_{C_g}(v_k,\ldots,v_l)- f_{C_g}(v_k,v_l,\overline{v_1})-1\\
                    &=2(l-k)+\binom{l-k-1}{2}+2(g-l+k)+\binom{g-l+k-1}{2}-(k-1)(g-l+1)-1\\
                    &=3g-l-k-gk+kl+\binom{l-k-1}{2}+\binom{g-l+k-1}{2}\\
                    &=3g-l-k-gk+kl+\frac{g^2+2l^2+2k^2-4kl+2gk-2gl-3g+4}{2}.                                                                                                                                                                                                                                                                                                                                                                                                                                                                                                                                                                                                                                                                                                                                                                                                                                                                                                           
\end{align*}
Thus, 
\begin{align}\label{a4}
f_{U^p_{n,g}}(v_k,v_l)=(k-1)(g-l+1)+2^{n-g-1}(g^2+2l^2+2k^2+3g+4-2kl-2gl-2l-2k).
\end{align}
 So by (\ref{a3}) and (\ref{a4}), we get $f_{U^p_{n,g}}(v_k,v_l)- f_{U^p_{n,g}}(v_k,y)=(k-1)(g-l+1)+2^{n-g-2}((g-2l+k+1)^2+(k-1^2)+g+2)>0.$\\   

Similarly, it can be checked that for $k>l$, $f_{U^p_{n,g}}(v_k,v_l)- f_{U^p_{n,g}}(v_k,y)>0$. Since $f_{U^p_{n,g}}(v_k,v_1)> f_{U^p_{n,g}}(v_k,y)$, we have 
 $\epsilon_{U^p_{n,g}}(v_k)=f_{U^p_{n,g}}(v_k,y)$ for $2\leq k\leq g$. This proves the result.

\item[(ii)]  Let $z$  be the the unique pendant vertex adjacent to $v_1$. Using a similar argument as in the first part, we can prove that for $1\leq i\leq g$,  $\epsilon_{U^p_{n,g}}(v_i)=f_{U^p_{n,g}}(v_i,z)$. Since $f_{U^p_{n,g}}(z,v_j)=f_{C_g}(v_1,v_j)$ and by Proposition $\ref{prop1}$ we know that $f_{C_g}(v_1,v_j)\geq f_{C_g}(v_1,v_{\lfloor \frac{g+2}{2}\rfloor})$ and this proves the result.
\end{enumerate}   
\end{proof}

\begin{theorem}\label{thm3}
Let $G\in \mathcal{U}_{n,g}$. Then $ES(G)\leq ES(U^p_{n,g})$ and the equality happens if and only if $G\cong U^p_{n,g}.$
\end{theorem}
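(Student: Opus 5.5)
By Lemma \ref{lem5}, it suffices to handle graphs of the form $H=U_{n,g}(K_{1,n_1},\ldots,K_{1,n_g})$, in which every tree hanging at $v_i$ is a star centred at $v_i$. Lemma \ref{lem5} gives $ES(G)\le ES(H)$, with equality only when every $T_i$ is a star. So the goal becomes showing that, among such $H$ with $n_1+\cdots+n_g=n-g$, the index is uniquely maximized when all $n-g$ pendant vertices sit at a single cycle vertex. The case $n=g$ is trivial, since then $G=C_g=U^p_{g,g}$.

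The plan is a transfer operation. Suppose at least two indices are positive, say $n_i,n_j>0$ with $i\ne j$. Let $H'$ be obtained by detaching all $n_j$ pendant vertices from $v_j$ and attaching them to $v_i$. We aim to prove $ES(H)<ES(H')$ by showing $\epsilon_H(x)\le\epsilon_{H'}(x)$ for every vertex $x$, with strict inequality for at least one $x$. Iterating the transfer reduces the number of cycle vertices carrying pendants until only one remains, which gives $U^p_{n,g}$. Since each step is strict, $U^p_{n,g}$ is the unique maximizer.

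The comparison uses the factorization of $f$ over the cycle. For a connected subgraph $S$ of $H$, write $A=V(S)\cap V(C_g)$. Then $S$ is a connected subgraph of $C_g$ on $A$, together with an arbitrary subset of the pendants at each $v_k\in A$. Hence
\[
f_H(x,y)=\sum_{S'} \prod_{v_k\in V(S')} 2^{n_k},
\]
where the sum runs over the connected subgraphs $S'$ of $C_g$ that contain the cycle projections of $x$ and $y$. Each pendant endpoint is forced into $S$, so its factor $2$ is removed. In this form, moving pendants from $v_j$ to $v_i$ replaces the weight $2^{n_i}2^{n_j}$ on subgraphs containing both vertices by the weight $2^{n_i+n_j}$, which is the same. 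It increases the weight of subgraphs containing $v_i$ but not $v_j$, from $2^{n_i}$ to $2^{n_i+n_j}$, and decreases the weight of those containing $v_j$ but not $v_i$. Because $(2^{a+b}-2^a)\ge(2^b-1)$, we get $2^{a+b}+1\ge 2^a+2^b$ for $a,b\geq 0$, and this convexity should dominate. In Lemma \ref{lem6} the minimizing partner is always a pendant vertex, or, when there is a single pendant, a specific antipodal cycle vertex. So for each $x$ we would fix the minimizer $r$ of $\epsilon_{H'}(x)$, which is a pendant at $v_i$ by the analogue of Lemma \ref{lem6} for $H'$ when there are several pendants. We would then exhibit a vertex $r^*$ of $H$, namely a pendant at $v_j$ or at $v_i$, chosen so that $f_H(x,r^*)\le f_{H'}(x,r)$.

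The main obstacle is the last step: comparing across the different minimizers and handling vertices $x$ that are themselves among the moved pendants or lie near $v_j$. The counts $f_{C_g}(v_a,v_b,\ldots)$ depend on cyclic positions, so the pairing of subgraphs is not automatic. If a pointwise comparison fails for some $x$, the fallback is to compare sums. We would write $ES(H)$ explicitly using the factorization above, with the minimum bounded by choosing $r$ to be the pendant at whichever of $v_i,v_j$ yields the smaller count. Using $f_{C_g}(v_a)$ from Lemma \ref{lem4}, and $f_{C_g}(v_a,v_b)$ from the proof of Proposition \ref{prop1}, we would then check the inequality termwise, as was done in the proof of Theorem \ref{thm2}, where the failure of Lemma \ref{lem1} at $v_2$ was compensated by pairing $\epsilon(v_1)+\epsilon(v_2)$. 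The case $n-g=1$ needs no transfer at all, since any $H$ with a single pendant already equals $U^p_{n,g}$.
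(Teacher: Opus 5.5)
Your reduction through Lemma~\ref{lem5} and the weight factorization of $f_H$ over connected subgraphs $S'$ of $C_g$ are both correct. However, the step you flag as the ``main obstacle'' is the entire content of the theorem, and the route you propose for it rests on a false claim. Lemma~\ref{lem6} describes only $U^p_{n,g}$. For an intermediate $H'$ that still has pendants at some $v_l\neq v_i$, the minimizer for $x=v_i$ is \emph{not} a pendant at $v_i$. To see this, let $y$ be a pendant at $v_i$ and $z$ a pendant at $v_l$. Every $S'$ through both $v_i$ and $v_l$ gets the same weight in $f_{H'}(v_i,y)$ and in $f_{H'}(v_i,z)$. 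Hence $f_{H'}(v_i,y)$ exceeds $f_{H'}(v_i,z)$ by the total weight of the $S'$ through $v_i$ that avoid $v_l$. For example, with $g=4$, two pendants at $v_1$ and one at $v_3$, one gets $f(v_1,y)=36$ but $f(v_1,z)=28$. Once the minimizer $r$ sits at another cycle vertex, comparing $f_H(x,r)$ with $f_{H'}(x,r)$ brings in exactly the subgraphs through $v_j$ but not $v_i$, whose weight drops. The inequality $2^{a+b}+1\ge 2^a+2^b$ alone does not control that loss, and the ``compare sums termwise'' fallback is a hope rather than an argument. As written, the proposal does not establish $ES(H)<ES(H')$ for the intermediate transfers.

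The missing idea is that no iteration is needed. Compare $H$ directly with $U^p_{n,g}$, gathering all pendants at a vertex $v_1$ with $n_1=\max_k n_k\ge 1$; the case $n-g\le 1$ is trivial, as you note. By Lemma~\ref{lem6}(i), for every $x$ we have $\epsilon_{U^p_{n,g}}(x)=f_{U^p_{n,g}}(x,y)$, where $y\neq x$ is a pendant at $v_1$. You may choose $y$ among the pendants already at $v_1$ in $H$; if $x$ is the only such pendant, then $x$ itself lies over $v_1$. Every connected subgraph counted in $f_H(x,y)$ then projects to a connected $S'\subseteq C_g$ that contains $v_1$ and the cycle vertex under $x$. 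Such an $S'$ is also counted in $f_{U^p_{n,g}}(x,y)$. After removing the same forced factors, its weight $2^{\sum_{v_k\in S'}n_k}$ in $H$ is at most its weight $2^{n-g}$ in $U^p_{n,g}$, so the ``decreasing'' class of subgraphs never occurs. Thus $\epsilon_H(x)\le f_H(x,y)\le f_{U^p_{n,g}}(x,y)=\epsilon_{U^p_{n,g}}(x)$ for every $x$. The inequality is strict at $x=v_1$, because the term $S'=\{v_1\}$ has weight $2^{n_1-1}<2^{n-g-1}$. This one-shot comparison is exactly the paper's proof.
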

\begin{proof}
The result is obvious if $n-g= 0$ or $1$. So, assume that $n-g\geq 2$. Let $G=U_{n,g}(T_1,T_2,\dots,T_g)\in \mathcal{U}_{n,g}$ with $|V(T_i)|=n_i+1$($n_i\geq 0$) for $i=1,2,\ldots ,g $. Then by by Lemma $\ref{lem5}$,  we have $\epsilon(G))\leq\epsilon(U_{n,g}(K_{1,n_1},K_{1,n_2},\ldots,K_{1,n_g}))$ and the equality occurs  if and only if $T_i\cong K_{1,n_i}$ with $v_i$ is the center of $K_{1,n_i}$ for $1\leq i\leq g$.

W.l.o.g., take $n_1\geq n_2\geq \cdots \geq n_g$. We have  $U_{n,g}(K_{1,n_1},K_{1,n_2},\ldots,K_{1,n_g}) \cong U^p_{n,g}$ if and only if $n_1=n-g$. Suppose $n_1\neq n-g$. Then there exists a vertex $v_k\neq v_1$ such that  $deg(v_k)\geq 3$.  We can obtain $U^p_{n,g}$ from $U_{n,g}(K_{1,n_1},K_{1,n_2},\dots,K_{1,n_g})$ by deleting all the pendant vertices of $U_{n,g}(K_{1,n_1},K_{1,n_2},\dots,K_{1,n_g})$ and adding them as pendant vertices at $v_1$. It is enough to show that $\epsilon(U_{n,g}(K_{1,n_1},K_{1,n_2},\dots,K_{1,n_g}))<\epsilon(U^p_{n,g})$.

By Lemma $\ref{lem6}$, for any $v\in V(U^p_{n,g})$, we have  $\epsilon_{U^p_{n,g}}(v)=f_{U^p_{n,g}}(v,x)$, where $x$ is a pendant vertex of $U^p_{n,g}$. Let $y$ be a pendant vertex at $v_1$ in $U_{n,g}(K_{1,n_1},K_{1,n_2},\dots,K_{1,n_g})$. Then

\begin{align*}
  \epsilon_{U^p_{n,g}}(v_1)= f_{U^p_{n,g}}(v_1,y)=2^{(n-g-1)}f_{C_g}(v_1)=2^{(n-g-1)}f_{C_g}(v_1,v_k)+2^{(n-g-1)}f_{C_g}(v_1,\overline{v_k}). 
\end{align*}
 Since $$f_{U_{n,g}(K_{1,n_1},K_{1,n_2},\dots,K_{1,n_g})}(v_1,\overline{v_k},y)<2^{(n-g-1)}f_{C_g}(v_1,\overline{v_k})$$
  and $f_{U_{n,g}(K_{1,n_1},K_{1,n_2},\dots,K_{1,n_g})}(v_1,v_k,y)\leq 2^{(n-g-1)}f_{C_g}(v_1,v_k)$ (with equality happens if and only if $n_1+n_k=n-g$), 
\begin{align*}
  \epsilon_{{U_{n,g}(K_{1,n_1},K_{1,n_2},\dots,K_{1,n_g})}}(v_1)\leq  f_{U_{n,g}(K_{1,n_1},K_{1,n_2},\dots,K_{1,n_g})}(v_1,y)< f_{U^p_{n,g}}(v_1,y)=\epsilon_{U^p_{n,g}}(v_1).
\end{align*}
For $2\leq i \leq g$, we have $f_{U^p_{n,g}}(v_i,y)=2^{(n-g-1)}f_{C_g}(v_i,v_1)$ and so
 $$ \epsilon_{U_{n,g}(K_{1,n_1},K_{1,n_2},\dots,K_{1,n_g})}(v_i)\leq f_{U_{n,g}(K_{1,n_1},K_{1,n_2},\dots,K_{1,n_g})}(v_i,y)\leq f_{U^p_{n,g}}(v_i,y)=\epsilon_{U^p_{n,g}}(v_i)$$ where second inequality becomes equality if and only if $n_1+n_i=n-g$.
 
Now let $z$ be a pendant vertex  of $U_{n,g}(K_{1,n_1},K_{1,n_2},\dots,K_{1,n_g})$ adjacent to $v_j$ for some $j$. Then $\epsilon_{U^p_{n,g}}(z)=f_{U^p_{n,g}}(z,x)=2^{n-g-2}f_{C_g}(v_1)$ where $x\neq z$ is a pendant vertex of $U^p_{n,g}$. So, $$\epsilon_{U_{n,g}(K_{1,n_1},K_{1,n_2},\dots,K_{1,n_g})}(z)\leq f_{U_{n,g}(K_{1,n_1},K_{1,n_2},\dots,K_{1,n_g})}(z,x)< f_{U^p_{n,g}}(z,x)=\epsilon_{U^p_{n,g}}(z).$$ 

Thus $\epsilon_{U_{n,g}(K_{1,n_1},K_{1,n_2},\dots,K_{1,n_g})}(v)\leq \epsilon_{U^p_{n,g}}(v)$ for all vertex $v$, with strict inequality atleast at one vertex. Hence, $\epsilon(U_{n,g}(K_{1,n_1},K_{1,n_2},\dots,K_{1,n_g}))<\epsilon(U^p_{n,g})$ and this proves the result. 
\end{proof}

\begin{lemma}\label{lem7}
Let $n\geq 5$ and $4\leq g < n$. Then $ES(U^p_{n,g})<ES(U^p_{n,{g-1}})$. 
\end{lemma}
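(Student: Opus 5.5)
The plan is to compute both sides in closed form using Lemma \ref{lem6} and then compare two explicit polynomial-times-power-of-two expressions. Write $m=n-g\geq 1$, so that $U^p_{n,g-1}$ has $m+1\geq 2$ pendant vertices. Set $F(g)=f_{C_g}(v_1)=\frac{g^2+g+2}{2}$ (Lemma \ref{lem4}) and $S(g)=\sum_{k=2}^{g} f_{C_g}(v_1,v_k)$. I would evaluate $S(g)$ by double counting. Each connected subgraph $H$ of $C_g$ containing $v_1$ is counted $|V(H)|-1$ times. A path with $j$ vertices through $v_1$ (for $1\le j\le g-1$) can be placed in $j$ ways, and the whole cycle contributes $g-1$. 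This gives
$$S(g)=\sum_{j=1}^{g-1} j(j-1)+(g-1)=\frac{g(g-1)(g-2)}{3}+(g-1).$$

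\textbf{Case $m\geq 2$.} By Lemma \ref{lem6}(i), every vertex attains its eccentric subgraph number at a pendant vertex. Each of the $m$ pendant vertices contributes $2^{m-2}F(g)$. The vertex $v_1$ contributes $2^{m-1}F(g)$, and each $v_k$ with $k\ge 2$ contributes $2^{m-1}f_{C_g}(v_k,v_1)$. Hence
$$ES(U^p_{n,g})=2^{m-2}\bigl[(m+2)F(g)+2S(g)\bigr].$$
The same formula with $(g,m)$ replaced by $(g-1,m+1)$ gives $ES(U^p_{n,g-1})=2^{m-1}\bigl[(m+3)F(g-1)+2S(g-1)\bigr]$. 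The claim thus reduces to
$$2(m+3)F(g-1)+4S(g-1)>(m+2)F(g)+2S(g).$$
I would split this into two inequalities. The first is $2F(g-1)=g^2-g+2\geq F(g)$, which is equivalent to $(g-1)(g-2)\geq 0$; multiplying by $m+2$ handles the $m$-dependent part, with extra slack $2F(g-1)$. The second is $4S(g-1)\geq 2S(g)$, i.e.\ $2S(g-1)\ge S(g)$. Both sides are explicit cubics in $g$, with leading terms $\frac{2g^3}{3}$ versus $\frac{g^3}{3}$, so this should be a routine check for $g\ge 4$. Any small-$g$ deficit is absorbed by the slack term $2F(g-1)$.

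\textbf{Case $m=1$.} Here Lemma \ref{lem6}(ii) applies to $U^p_{n,g}$. The pendant vertex $z$ contributes $\epsilon_{C_g}$, which equals $\lfloor\frac{g^2+2g+5}{4}\rfloor$ by Proposition \ref{prop1}. The vertex $v_1$ contributes $F(g)$, and each $v_k$ contributes $f_{C_g}(v_k,v_1)$. So
$$ES(U^p_{n,g})=F(g)+S(g)+\left\lfloor\tfrac{g^2+2g+5}{4}\right\rfloor,$$
while $ES(U^p_{n,g-1})=4F(g-1)+2S(g-1)$ by the $m\ge2$ formula with two pendant vertices. Again the cubic terms are $\frac{2g^3}{3}$ versus $\frac{g^3}{3}$, so the inequality should hold for all $g\ge 4$. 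I would verify it by expanding and checking the base case $g=4$ directly: there the two values are $4\cdot 4+2\cdot 4=24$ against $11+10+7=28$? This must be recomputed carefully.

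The main obstacle is precisely this small-$g$ verification, especially when $m=1$, where the extra $\lfloor\cdot\rfloor$ term and the factor $2^{m-2}$ leave the least room. If the naive expansion fails at $g=4$, I would recheck $S(g)$ against Proposition \ref{prop1}'s formula $f_{C_g}(v_1,v_k)=2g-1+\binom{g-k}{2}+\binom{k-2}{2}$ summed over $k$, and then treat $g=4,5$ by direct computation.
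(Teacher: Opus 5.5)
Your strategy (closed forms from Lemma~\ref{lem6} on both sides, then compare) is legitimate and differs from the paper. The paper turns $U^p_{n,g}$ into $U^p_{n,g-1}$ by identifying $v_g$ with $v_1$ and compares eccentric subgraph numbers vertex by vertex, pairing $v_1$ with $v_g$. However, your central computation is wrong. Your double count of $S(g)$ lists the paths on $j\le g-1$ vertices and the cycle, but omits the $g$ spanning paths $C_g-e$. These are distinct connected subgraphs containing $v_1$, each counted $g-1$ times, and they are already included in your $F(g)$ from Lemma~\ref{lem4}. Your enumeration would give only $\binom{g}{2}+1$ subgraphs through $v_1$, instead of $F(g)=\binom{g}{2}+g+1$. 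The correct value is
\[
S(g)=\sum_{j=1}^{g} j(j-1)+(g-1)=\frac{(g-1)g(g+1)}{3}+(g-1)=(g-1)(2g-1)+2\binom{g-1}{3},
\]
the last form being what you get by summing Proposition~\ref{prop1}'s formula over $k$. Your version gives $S(3)=4$ instead of $10$, so your formula would yield $ES(U^p_{n,3})=2^{n-5}(7n+1)$, contradicting Lemma~\ref{lem8}. This error, together with the slip $F(3)=4$ (in fact $F(3)=7$), is why your base case came out as $24$ versus $28$. The true values are $ES(U^p_{5,3})=4\cdot 7+2\cdot 10=48$ and $ES(U^p_{5,4})=11+23+7=41$.

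With the corrected $S$ the argument closes, but not quite as you predicted. We have $2S(g-1)-S(g)=\frac{g(g-1)(g-5)}{3}+g-3$, which is $-3$ at $g=4$, so your claim $2S(g-1)\ge S(g)$ for all $g\ge 4$ is false there and the slack is genuinely needed. For $m\ge 2$, the difference of the two sides of your reduced inequality is
\[
(m+2)\frac{(g-1)(g-2)}{2}+2F(g-1)+2\Bigl(\frac{g(g-1)(g-5)}{3}+g-3\Bigr),
\]
which equals $3m+14$ at $g=4$ and is clearly positive for $g\ge 5$. For $m=1$, the difference is at least $\frac{g(g-1)(g-5)}{3}+\frac{5g^2-8g-5}{4}$, which is positive for all $g\ge 4$. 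On the $U^p_{n,g}$ side you only need the trivial upper bound $\epsilon(v)\le f(v,u)$. Exactness on the $U^p_{n,g-1}$ side comes from Lemma~\ref{lem6}(i), which applies since that graph has $n-g+1\ge 2$ pendant vertices. Once repaired, your route yields an explicit formula for $ES(U^p_{n,g})$ that generalizes Lemma~\ref{lem8}. The paper's local comparison of $f(v_j,y)$ term by term avoids computing $S(g)$ at all.
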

\begin{proof}
 Let  $C_g:v_1v_2\cdots v_gv_1$ be the cycle in $U^p_{n,g}$ with  all $n-g$ pendant vertices attached to $v_1$. Obtain $U^p_{n,g-1}$ form $U^p_{n,g}$ by identifying the vertices $v_g$ and $v_1$ as $v_1$ and adding $v_g$ as a pendant vertex at $v_1$. Let $y$ be a pendant vertex at $v_1$ in both $U^p_{n,g}$ and $U^p_{n,g-1}$. Then 
 
 $$f_{U^p_{n,g}}(v_1,y)=2^{n-g-1} \left(2g+\binom{g-1}{2}\right)\;\; \mbox{and} \;\; f_{U^p_{n,{g-1}}}(v_1,y)=2^{n-g} \left(2g-2+\binom{g-2}{2}\right),$$ also 

  $$f_{U^p_{n,g}}(v_g,y)=2^{n-g-1}\left(2g-1+\binom{g-2}{2}\right)\;\; \mbox{and} \;\; f_{U^p_{n,{g-1}}}(v_g,y)=2^{n-g-1}\left(2(g-1)+\binom{g-2}{2}\right).$$

So, $f_{U^p_{n,g}}(v_1,y)+f_{U^p_{n,g}}(v_g,y)=2^{n-g-1}\left(4g-1+\binom{g-1}{2}+\binom{g-2}{2}\right)=2^{n-g-1}(g^2+3)$ and 
\begin{align*}
f_{U^p_{n,{g-1}}}(v_1,y)+f_{U^p_{n,{g-1}}}(v_g,y)&=2^{n-g-1}\left(6(g-1)+3\binom{g-2}{2}\right)\\
                                                 &>2^{n-g-1}(6(g-1)+(g-2)(g-3))\\
                                                 &=2^{n-g-1}(g^2+g).
\end{align*}
By Lemma $\ref{lem6}$, we have $\epsilon_{U^p_{n,{g-1}}}(v_1)=f_{U^p_{n,{g-1}}}(v_1,y)$ and $\epsilon_{U^p_{n,{g-1}}}(v_g)=f_{U^p_{n,{g-1}}}(v_g,y)$ and thus
\begin{align*} 
\epsilon_{U^p_{n,g}}(v_1)+\epsilon_{U^p_{n,g}}(v_g)&\leq f_{U^p_{n,g}}(v_1,y)+f_{U^p_{n,g}}(v_g,y)\\
                                                   &=2^{n-g-1}(g^2+3)\\
                                                   &< 2^{n-g-1}(g^2+g)\\
                                                   &<f_{U^p_{n,{g-1}}}(v_1,y)+f_{U^p_{n,{g-1}}}(v_g,y)\\
                                                   &=\epsilon_{U^p_{n,{g-1}}}(v_1)+\epsilon_{U^p_{n,{g-1}}}(v_g).
\end{align*}

For $2\leq j\leq g-1$, we have 
$$ f_{U^p_{n,g}}(v_j,y)=2^{n-g-1}\left(2g-1+\binom{g-j}{2}+\binom{j-2}{2}\right)$$  and 
$$f_{U^p_{n,{g-1}}}(v_j,y)=2^{n-g}\left(2(g-1)-1+\binom{g-1-j}{2}+\binom{j-2}{2}\right).$$ So,
\begin{align*}
    f_{U^p_{n,{g-1}}}(v_j,y)-f_{U^p_{n,g}}(v_j,y)&=2^{n-g-1}\left(2g-5+2\binom{g-1-j}{2}-\binom{g-j}{2}+\binom{j-2}{2}\right)\\
                                                 &=2^{n-g-1}\left(2g-5+\binom{g-j-1}{2}-\binom{g-j}{2}+\binom{g-j-1}{2}+\binom{j-2}{2}\right)\\
                                                 &=2^{n-g-1}\left(2g-5-(g-j-1)+\binom{g-j-1}{2}+\binom{j-2}{2}\right)\\
                                                 &=2^{n-g-1}\left(g+j-4+\binom{g-j-1}{2}+\binom{j-2}{2}\right)\\
                                                 &>0\;\; \mbox{(since $g\geq 4$  and $j\geq 2$)}.
\end{align*}
Thus, $\epsilon_{U^p_{n,{g}}}(v_j)\leq  f_{U^p_{n,{g}}}(v_j,y)<f_{U^p_{n,{g-1}}}(v_j,y)=\epsilon_{U^p_{n,{g-1}}}(v_j)$ for $2\leq j \leq g-1$.\\

Let $x$ be a pendant vertex of $U^p_{n,{g}}$. If $n-g\geq 2$ then by Lemma $\ref{lem6}$, we have $\epsilon_{U^p_{n,{g}}}(x)=f_{U^p_{n,{g}}}(x,y)$ for some pendant vertex $y\neq x$. So, $f_{U^p_{n,g}}(x,y)=2^{n-g-2} (2g+\binom{g-1}{2})$ and $f_{U^p_{n,{g-1}}}(x,y)=2^{n-g-1} (2g-2+\binom{g-2}{2})$. Thus 
\begin{align*}
   f_{U^p_{n,{g-1}}}(x,y)-f_{U^p_{n,g}}(x,y)&=2^{n-g-2}\left(2g-4+2\binom{g-2}{2}-\binom{g-1}{2}\right)\\
   &=2^{n-g-2}\left(2g-4+\binom{g-2}{2}-\binom{g-1}{2}+\binom{g-2}{2}\right)\\
   &=2^{n-g-2}\left(2g-4-(g-2)+\binom{g-2}{2}\right)\\
   &=2^{n-g-2}\left(g-2+\binom{g-2}{2}\right)>0  \;\; \mbox{(since $g\geq 4$)}.
\end{align*}
So, $\epsilon_{U^p_{n,{g}}}(x)<\epsilon_{U^p_{n,{g-1}}}(x)$ if $n-g\geq 2$.

If $n-g=1$ then $x=y$. By Lemma $\ref{lem6}$, we have $\epsilon_{U^p_{n,{g}}}(x)=f_{U^p_{n,g}}(x,v_{\lfloor \frac{g+2}{2} \rfloor})$ and  $\epsilon_{U^p_{n,{g-1}}}(x)=f_{U^p_{n,g-1}}(x,v_g)=2(g-1)+\binom{g-2}{2}$. If $g$ is even  
\begin{align*}
 f_{U^p_{n,g-1}}(x,v_g)-f_{U^p_{n,g}}(x,v_{\lfloor \frac{g+2}{2} \rfloor})= \frac{g^2-g+2}{2}-\frac{g^2+2g+4}{4}=\frac{g^2-4g}{4}\geq 0,\; \mbox{(since $g\geq 4$)},
\end{align*}
 and if $g$ is odd 
\begin{align*}
 f_{U^p_{n,g-1}}(x,v_g)-f_{U^p_{n,g}}(x,v_{\lfloor \frac{g+2}{2} \rfloor})= \frac{g^2-g+2}{2}-\frac{g^2+2g+5}{4}=\frac{g^2-4g-1}{4}>0,\; \mbox{(since $g\geq 5$)}.
\end{align*}
Thus $\epsilon_{U^p_{n,{g}}}(x)\leq\epsilon_{U^p_{n,{g-1}}}(x)$ and this proves the result.

\end{proof}

\begin{lemma}\label{lem8}
Let $n\geq 4$. Then  $ES(U^p_{n,3})=
      \begin{cases}
           22 , & if\  n=4;\\
           2^{n-5}(13+7n) , & if \ n\geq 5.
         \end{cases}$   
\end{lemma}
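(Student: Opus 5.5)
The plan is a direct computation: Lemma \ref{lem6} tells us, for each vertex, which partner vertex attains the minimum in the definition of $\epsilon$. So it only remains to evaluate the corresponding counts $f$ on the triangle and sum them.

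Label the triangle $C_3:v_1v_2v_3v_1$ with all $n-3$ pendant vertices attached at $v_1$. The two cycle quantities needed are:
\begin{itemize}
\item $f_{C_3}(v_1)=2\cdot 3+\binom{2}{2}=7$, by Lemma \ref{lem4};
\item $f_{C_3}(v_1,v_k)=2\cdot 3-1+\binom{3-k}{2}+\binom{k-2}{2}=5$ for $k=2,3$, from the formula in the proof of Proposition \ref{prop1}.
\end{itemize}
Each pendant vertex not in a chosen subgraph can independently be included or excluded whenever $v_1$ is present. This produces the powers of $2$ below.

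\textbf{Case $n\geq 5$ (so $n-g\geq 2$).} By Lemma \ref{lem6}(i), every vertex $v$ has $\epsilon(v)=f(v,y)$ for a pendant vertex $y\neq v$. The contributions are:
\begin{itemize}
\item a pendant vertex $x$: $f(x,y)=2^{n-5}f_{C_3}(v_1)=7\cdot 2^{n-5}$, and there are $n-3$ such vertices;
\item $v_1$: $f(v_1,y)=2^{n-4}f_{C_3}(v_1)=14\cdot 2^{n-5}$;
\item $v_k$ for $k=2,3$: $f(v_k,y)=2^{n-4}f_{C_3}(v_1,v_k)=10\cdot 2^{n-5}$ each.
\end{itemize}
Summing gives
\[
2^{n-5}\bigl(7(n-3)+14+20\bigr)=2^{n-5}(7n+13).
\]

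\textbf{Case $n=4$ (so $n-g=1$).} Here Lemma \ref{lem6}(ii) applies, with $z$ the unique pendant vertex. The contributions are:
\begin{itemize}
\item $z$: $\epsilon(z)=f(z,v_2)=f_{C_3}(v_1,v_2)=5$;
\item $v_1$: $\epsilon(v_1)=f(v_1,z)=f_{C_3}(v_1)=7$;
\item $v_2$ and $v_3$: $\epsilon(v_k)=f(v_k,z)=f_{C_3}(v_1,v_k)=5$ each.
\end{itemize}
The total is $5+7+5+5=22$. This case must be stated separately, since the general formula would give the non-integer $41/2$ at $n=4$.

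The main point needing care is the bookkeeping of the factors of $2$. When the pendant $y$ is forced into the subgraph, only the remaining $n-4$ pendants are free. When $x$ and $y$ are both forced, only $n-5$ are free. Apart from that, everything follows from Lemma \ref{lem6}, so there is no real obstacle.
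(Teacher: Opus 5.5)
Your proposal is correct and is the same direct computation the paper uses. Both identify the minimizing partner for each vertex via Lemma~\ref{lem6} and evaluate the counts as $f_{C_3}(v_1)=7$ and $f_{C_3}(v_1,v_k)=5$, multiplied by the right power of $2$ for the free pendant vertices. This gives $5+7+5+5=22$ for $n=4$ and $2^{n-5}\bigl(7(n-3)+14+20\bigr)=2^{n-5}(7n+13)$ for $n\geq 5$.
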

\begin{proof}
For $n=4$, there is single pendant vertex $x$ at $v_1$. Then $\epsilon_{U^p_{4,3}}(v_1)=7$, $\epsilon_{U^p_{4,3}}(v_2)=\epsilon_{U^p_{4,3}}(v_3)=5$ and $\epsilon_{U^p_{4,3}}(x)=f_{U^p_{4,3}}(x,v_2)=5$. So, $ES({U^p_{n,3}})=22$. For $n\geq 5$, $U^p_{n,3}$  has $n-3\geq 2$ pendant vertices attached to $v_1$. Then $\epsilon_{U^p_{n,3}}(v_1)=7\times2^{n-4}$ and $\epsilon_{U^p_{n,3}}(v_2)=\epsilon_{U^p_{n,3}}(v_3)=5\times2^{n-4}$. Also, for any  pendant vertex $x$ at $v_1$,   $\epsilon_{U^p_{n,3}}(x)=7\times2^{n-5}$. Thus, $ES({U^p_{n,3}})=7\times2^{n-4}+2\times5\times2^{n-4}+(n-3)\times7\times2^{n-5}=2^{n-5}(13+7n)$.
\end{proof}

\begin{remark}\label{rem1}
From  Proposition  $\ref{prop1}$, we have, $ES(C_4)=28$  and $ES(C_5)=50$. Hence,  $ES(U^p_{4,3})<ES(C_4)$ and also $ES(U^p_{5,3})<ES(C_5)$. 
\end{remark}
\begin{theorem}\label{thm4}
 Let $G$ be a unicyclic graph on $n\geq 6$ vertices. Then $ES(G)\leq ES(U^p_{n,3})$ and the equality happens if and only if $G\cong U^p_{n,3}.$  
\end{theorem}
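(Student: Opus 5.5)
Let $G\in\mathcal{U}_n$ with girth $g$, so $G\in\mathcal{U}_{n,g}$ for some $3\le g\le n$. By Theorem \ref{thm3}, $ES(G)\le ES(U^p_{n,g})$, with equality if and only if $G\cong U^p_{n,g}$. So the task reduces to comparing the finitely many graphs $U^p_{n,g}$, $3\le g\le n$. For $4\le g<n$, iterating Lemma \ref{lem7} gives
\[
ES(U^p_{n,g})<ES(U^p_{n,g-1})<\cdots<ES(U^p_{n,3}),
\]
so every $U^p_{n,g}$ with $4\le g\le n-1$ has strictly smaller index than $U^p_{n,3}$. Lemma \ref{lem7} requires $g<n$, so the only remaining case is the cycle $g=n$, where $U^p_{n,n}=C_n$. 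We must show $ES(C_n)<ES(U^p_{n,3})$ for $n\ge 6$. Remark \ref{rem1} shows this fails for $n=4,5$, which is why the hypothesis $n\ge 6$ is needed.

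For the cycle case, Proposition \ref{prop1} gives $ES(C_n)=n\lfloor\frac{n^2+2n+5}{4}\rfloor\le \frac{n(n^2+2n+5)}{4}$, and Lemma \ref{lem8} gives $ES(U^p_{n,3})=2^{n-5}(13+7n)$. It therefore suffices to prove
\[
n(n^2+2n+5)<2^{n-3}(7n+13)\quad\text{for all } n\ge 6.
\]
At $n=6$ the left side is $6\cdot 53=318$ and the right side is $8\cdot 55=440$, so the inequality holds. For the inductive step $n\to n+1$, the right side multiplies by $2\cdot\frac{7n+20}{7n+13}>2$. The left side multiplies by $\frac{(n+1)((n+1)^2+2(n+1)+5)}{n(n^2+2n+5)}$, which is less than $2$ for $n\ge 6$. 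For instance, $(n+1)(n^2+4n+8)<2n(n^2+2n+5)$ reduces to $n^3-n^2-2n-8>0$, which is true for $n\ge 3$. Hence the inequality holds for all $n\ge 6$.

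Combining the steps: if $g=3$, then equality in Theorem \ref{thm3} holds only for $G\cong U^p_{n,3}$. If $4\le g\le n-1$, then $ES(G)\le ES(U^p_{n,g})<ES(U^p_{n,3})$. If $g=n$, then $G=C_n$ and $ES(G)<ES(U^p_{n,3})$ by the cycle case. This gives the bound and the uniqueness of the extremal graph.

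The main obstacle is the cycle case, since Lemma \ref{lem7} does not cover $g=n$. It comes down to the elementary exponential-versus-cubic estimate above, with care at the base case $n=6$. One detail to check is that the chain from Lemma \ref{lem7} works at the top end. When $g=n-1$, the graph $U^p_{n,n-1}$ has exactly one pendant vertex, and Lemma \ref{lem7} covers this through its $n-g=1$ subcase. So the chain is valid for all $4\le g\le n-1$.
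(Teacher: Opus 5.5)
Your proposal is correct and follows the paper's proof: Theorem~\ref{thm3} and Lemma~\ref{lem7} reduce the problem to comparing $C_n$ with $U^p_{n,3}$, and Proposition~\ref{prop1} with Lemma~\ref{lem8} turns that comparison into $n(n^2+2n+5)<2^{n-3}(7n+13)$ for $n\ge 6$. The differences are cosmetic: you prove this inequality by induction from $n=6$ rather than by the paper's regrouping into manifestly positive terms, and you state the equality case (girth $3$, $4\le g\le n-1$, and $g=n$) more explicitly than the paper does.
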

\begin{proof}
From Theorem $\ref{thm3}$ and Lemma $\ref{lem7}$, it follows that either $U^p_{n,3}$ or $C_n$ maximizes the eccentric subgraph index  among all  unicyclic graphs on $n$ vertices. From  Proposition  $\ref{prop1}$ and  Lemma $\ref{lem8}$, we have $ES(C_n)=n\lfloor \frac{n^2+2n+5}{4}\rfloor$ and $ES({U^p_{n,3}})=2^{n-5}(13+7n)$. So,
\begin{align*}
  \epsilon(U^p_{n,3})-\epsilon(C_n)&=2^{n-5}(13+7n)-n\lfloor \frac{n^2+2n+5}{4}\rfloor\\
  &\geq 2^{n-5}(13+7n)- \frac{n^3+2n^2+5n}{4}\\
  &=\frac{2^{n-3}(13+7n)-(n^3+2n^2+5n)}{4}\\
  &=\frac{n(5\times2^{n-3}-n^2)+2(5\times2^{n-3}-n^2)+n(2\times2^{n-3}-5)+3\times 2^{n-3}}{4}\\
  &=\frac{(5\times2^{n-3}-n^2)(n+2)+n(2^{n-2}-5)+3\times 2^{n-3}}{4}>0,
\end{align*}
as $5\times2^{n-3}-n^2>0$ and $2^{n-2}-5>0$ for $n\geq 6$. This proves the result.
\end{proof}

\section{Trees with fixed diameter}\label{tfd}

Let $T$ be a tree. The eccentric diameter $\epsilon^d(T)$ of $T$ is defined as $\epsilon^d(T)=\min\{f_T(u,v): u, v\in V(T)\}$. The following useful results on eccentric subtree number and eccentric diameter are due to Zhang et al.

\begin{theorem}[\cite{Zhang}, Proposition 2.1, Theorem 2.3]\label{zha1}
Let $T$ be a tree. Then the following hold:
\begin{enumerate}
\item[(i)] For $v\in V(T),$ if $\epsilon(v)= f_T(v,u)$ then $u$ is a pendant vertex;
\item[(ii)] If $\epsilon^d(T)=f_T(u,v)$ then both $u$ and $v$ are pendant vertices;
\item[(iii)] If $\epsilon^d(T)=f_T(u,v)$ for some pendant vertices $u$ and $v$ then $\epsilon(w)=\min\{f_T(w,u), f_T(w,v)\}$ for any $w\in V(T)$.
\end{enumerate}
\end{theorem}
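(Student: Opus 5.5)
The statement collects three facts from \cite{Zhang}; I would prove them in order, each resting on one monotonicity observation for trees.

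\textbf{Key observation.} Let $T$ be a tree, $v\in V(T)$, and $u$ a non-pendant vertex of $T$. Then $f_T(v,u)$ is not minimal among $\{f_T(v,w):w\in V(T)\}$. For a proof, note that if $u=v$ the claim is trivial, since $f_T(v,v)=f_T(v)>f_T(v,w)$ for every $w\neq v$. Otherwise $u\neq v$, and since $\deg(u)\geq 2$ there is a neighbour $w$ of $u$ not lying on the $v$-$u$ path. Every subtree containing $v$ and $w$ must contain the $v$-$w$ path, which passes through $u$. Hence
$$\{\text{subtrees containing } v,w\}\subsetneq\{\text{subtrees containing } v,u\}.$$
The inclusion is proper because the $v$-$u$ path itself is in the second set but not the first. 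So $f_T(v,w)<f_T(v,u)$. Repeating this step from $w$, the process terminates at a pendant vertex.

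\textbf{Parts (i) and (ii).} Part (i) is exactly the key observation. For (ii), apply the same argument with the roles of $u$ and $v$ swapped: if $u$ were not pendant, extend past $u$ to strictly decrease $f_T(u,v)$; symmetrically for $v$. The case $u=v$ is also excluded, since $f_T(u,u)>f_T(u,w)$ for any $w\neq u$.

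\textbf{Part (iii).} This is the main obstacle. Fix pendant vertices $u,v$ attaining $\epsilon^d(T)$, and a vertex $w$. By (i), $\epsilon(w)=f_T(w,x)$ for some pendant vertex $x$; we must show $f_T(w,x)\geq \min\{f_T(w,u),f_T(w,v)\}$.

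Let $P$ be the $u$-$v$ path and $p$ the projection of $w$ onto $P$. Let $q$ be the vertex where the $w$-$x$ path leaves the tree-with-$P$ structure; concretely, $q$ is the projection of $x$ onto $P\cup(w\text{-}p\text{ path})$. Each $f$ value factors as a product over the path of factors $f_{X_i}(v_i)$, as in the proof of Lemma \ref{lem2}. The minimality $f_T(u,v)\leq f_T(u,x)$ and $f_T(u,v)\leq f_T(v,x)$ then gives inequalities between the product of branch factors from $q$ to $x$ and the product from $q$ to $u$ or to $v$.

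I would split into cases according to whether $q$ lies on $P$ (on the $p$-$u$ side or the $p$-$v$ side) or on the $w$-$p$ path. In each case I would choose the endpoint, $u$ or $v$, lying on the opposite side from $x$. The comparison $f_T(\cdot,v)\leq f_T(\cdot,x)$ at the branching vertex $q$ then transfers to the pair $(w,\cdot)$, because the common prefix of the two paths contributes the same factors. The main obstacle is the bookkeeping of the branch components: the component hanging at $q$ changes when the path direction changes. The trick I would use is to compare $f_T(q,x,\overline{\text{rest}})$-type quantities via the factorization and multiply both sides by the common prefix factor.
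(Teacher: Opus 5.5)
The paper gives no proof of this statement (it is quoted from Zhang et al.), so your argument has to stand on its own.

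\textbf{Parts (i) and (ii).} These are correct. The nesting argument is exactly right: every subtree through $v$ and $w$ contains $u$, and the $v$--$u$ path shows the inclusion is proper. Part (ii) follows by applying it from each end.

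\textbf{Part (iii): the case $q\in P\setminus\{p\}$.} Part (iii) is only a plan, and the transfer mechanism you describe handles only this case. Say $x$ leaves $P$ at a vertex $q$ strictly between $p$ and $v$. Then the viewpoint $u$ and the vertex $w$ enter $q$ through the same edge $e=\{q^-,q\}$. Cutting $e$ gives $f_T(a,y)=f_{T_1}(a,q^-)\,f_{T_2}(q,y)$ for $a\in\{u,w\}$ and $y\in\{v,x\}$, so $f_T(u,v)\le f_T(u,x)$ is equivalent to $f_T(w,v)\le f_T(w,x)$.

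Fix the roles explicitly: the endpoint opposite to $x$ must be the \emph{viewpoint}, and $x$ is compared with the endpoint on its own side. Comparing $x$ with the opposite endpoint is false in general. Take the path $u\,a\,b\,c_1\cdots c_m$ with $v=c_m$ and $m\ge 1$, a pendant $x$ at $b$, and $w=a$. Here $(u,v)$ attains $\epsilon^d(T)=2$, yet $f_T(a,u)=2m+3>2m+2=f_T(a,x)$.

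\textbf{The gap: $q$ on the $w$--$p$ path with $q\neq p$.} Minimality only gives $f_T(u,v)\le f_T(u,x)$ and $f_T(v,u)\le f_T(v,x)$, and these compare paths that branch at $p$. The comparison you need for $w$ branches at $q$. For instance, the viewpoint $v$ meets the $u$--$x$ path at $p$, while $w$ meets it at $q\ne p$. So no cut and no common prefix relates the two ratios, and there is no ``comparison at the branching vertex $q$'' to transfer.

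The missing step is a monotonicity shift, which your key observation already supplies:
\begin{enumerate}
\item[(a)] Cutting at $p$ turns $f_T(v,u)\le f_T(v,x)$ into $f_T(p,u)\le f_T(p,x)$.
\item[(b)] Since $p$ lies on the $q$--$u$ path and $q$ lies on the $p$--$x$ path, nesting gives $f_T(q,u)\le f_T(p,u)\le f_T(p,x)\le f_T(q,x)$.
\item[(c)] Cutting at $q$ on the side of $w$ (nothing to do if $q=w$) yields $f_T(w,u)\le f_T(w,x)$.
\end{enumerate}

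\textbf{The case $q=p$, $w\notin P$.} This also falls outside your ``common prefix'' description, because $u$ and $w$ enter $p$ through different edges. Route it through $p$: $f_T(u,v)\le f_T(u,x)\iff f_T(p,v)\le f_T(p,x)\iff f_T(w,v)\le f_T(w,x)$.

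\textbf{A uniform lemma.} All cases follow from one statement you should state and prove. If $a$ meets the $y$--$y'$ path at $c$, then $f_T(a,y)\le f_T(a,y')$ if and only if $f_T(c,y)\le f_T(c,y')$. Moreover, along that path $f_T(c,y)$ increases and $f_T(c,y')$ decreases as $c$ moves toward $y$.
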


Let $\mathcal{T}^n_{d}$ be the set of all trees of order $n$ with diameter $d$. Note that $K_2$ is the only tree with diameter $1$ and for $n\geq 3$, $K_{1,{n-1}}$ is the only tree with diameter $2$. Also, $P_n$ is the only tree on $n$ vertices with diameter $n-1$. Therefore, we consider $3\leq d\leq n-2$. Let $C^n_d$ be the set of all caterpillar trees on $n$ vertices with diameter $d$. We can form any caterpillar tree in the following way. For $1\leq i \leq d-1$, take $n_i\geq 0$ with $n_1+n_2+\cdots +n_{d-1}=n-d-1$. Consider the path $P_{d+1}:v_0v_1\cdots v_d$ and form the caterpillar tree by attaching $n_i$  pendant vertices at $v_i$ for $1\leq i\leq d-1$.

\begin{lemma}\label{lem9}
Let $T\in\mathcal{T}^n_{d}$. If $T\notin C^n_d$ then there exist a $T'\in C^n_d$ such that $ES(T)<ES(T')$.
\end{lemma}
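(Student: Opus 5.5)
Fix a diametral path $P:v_0v_1\cdots v_d$ of $T$. Since $T\notin C^n_d$, some vertex off $P$ has degree at least $2$. Removing the edges of $P$ leaves components; for $1\le i\le d-1$, let $T_i$ be the component containing $v_i$. At least one $T_i$ is not a star centred at $v_i$. We will straighten these components one edge at a time using Theorem \ref{thm2}, without changing the diameter.

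Choose $i$ such that $T_i$ is not a star with centre $v_i$. Then $v_i$ has a neighbour $w\in V(T_i)$ with $\deg(w)\ge 2$. The edge $e=\{v_i,w\}$ is a cut edge of $T$. Let $G_1$ be the component of $T-e$ containing $v_i$; it contains $P$, so $|V(G_1)|\ge 2$. Let $G_2$ be the component containing $w$; since $\deg(w)\ge2$, also $|V(G_2)|\ge 2$. Form $T'$ as in Lemma \ref{lem1}: identify $w$ with $v_i$ and re-attach $w$ as a pendant vertex at $v_i$. Because $G_1$ is a tree, Theorem \ref{thm2} gives $ES(T)<ES(T')$. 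The result $T'$ is again a tree on $n$ vertices, and $P$ is still a path in it, so $\mathrm{diam}(T')\ge d$.

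We must also check $\mathrm{diam}(T')\le d$. The operation contracts the edge $e$ and adds one pendant edge at $v_i$. Contracting an edge cannot increase any distance. The only new vertex is $w$, whose distance to any $u\neq w$ equals $1+d_{T'-w}(v_i,u)$, which is at most $1+d_T(v_i,u)$ for $u\in V(G_1)$.

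Distances from $v_i$ are at most $\max(i,d-i)$ inside $G_1$. If some vertex $u$ of $G_1$ off $P$ had $d(v_i,u)>\max(i,d-i)$, the path from $v_0$ or $v_d$ to $u$ would exceed $d$, contradicting that $P$ is diametral. For $u\in V(G_2)$, contraction reduces the distance, so $d_{T'}(w,u)=1+d_{T}(w,u)-1\le d_T(v_i,u)\le\max(i,d-i)$. Hence every distance from $w$ is at most $1+\max(i,d-i)\le d$, because $1\le i\le d-1$. The paper's argument is unaffected if we instead note that $w$ now behaves like a pendant neighbour of $v_i$, just as $v_{i-1}$ or $v_{i+1}$ does.

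This diameter bookkeeping is the one point that needs care. The delicate case is $u\in G_2$ after contraction, where one must confirm that contraction plus the new pendant edge cannot exceed $d$.

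Each step strictly decreases $\sum_i\sum_{x\in V(T_i)} d(x,v_i)$, since the vertices of $G_2$ other than $w$ move one step closer to $v_i$ and $w$ stays at distance $1$. So the process terminates. At termination every $T_i$ is a star centred at $v_i$, meaning every vertex off $P$ is a pendant vertex adjacent to some $v_i$; this is exactly a caterpillar. The final tree $T'$ lies in $C^n_d$, and since each step strictly increased $ES$, we get $ES(T)<ES(T')$.
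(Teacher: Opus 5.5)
Your proof is correct and follows the paper's argument: repeatedly take a neighbour $w$ of some $v_i$ with $\deg(w)\ge 2$, identify $w$ with $v_i$ and re-attach $w$ as a pendant vertex, applying Theorem~\ref{thm2} at each step with the tree $G_1\supseteq P$, until every branch at $v_i$ is a star centred at $v_i$. Your checks that the diameter stays $d$ and that the process terminates are details the paper leaves implicit; one small slip is that for $u\in V(G_2)\setminus\{w\}$ the correct relation is $d_{T'}(w,u)=1+d_T(w,u)=d_T(v_i,u)$, which still gives the bound you use.
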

\begin{proof}
Let  $P_{d+1}:v_0v_1\cdots v_d$ be a path in $T$. For $1\leq i\leq d-1$, Let $X_i$ be the component of  $T-v_{i-1}v_i-v_iv_{i+1}$ containing $v_i$. If $T\notin C^n_d$  then there exists $j$, where $1\leq j\leq d-1$ such that $X_j$ is not a star with center at $v_j$. So, $v_j$ is adjacent to a vertex $v\in V(X_j)$ such that $deg(v)\geq 2$. Identify $v$ and $v_j$, and add $v$ as a pendant vertex at the identified vertex  $v_{j}$. Repeat this process until $X_j$ becomes  a star with center $v_j$. Continue this process till all $X_i$ are stars with center $v_i$. Call the new tree as $T'$. Clearly $T'\in C^n_d$. By Theorem $\ref{thm2}$, in each step  the eccentric subgraph index will increase and hence $ES(T)<ES(T')$.
\end{proof}

\begin{lemma}\label{lem10}
Let $T\in C^n_d$ and let $P_{d+1}:v_0v_1\cdots v_d$ be a path in $T$. Then for  $v\in V(T)$,  $\epsilon_T(v)=min\{f_T(v,v_0), f_T(v,v_d)\}$.    
\end{lemma}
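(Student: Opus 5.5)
By Theorem \ref{zha1}(i) and (iii), it suffices to show that the eccentric diameter of $T$ is attained at the pair $\{v_0,v_d\}$, i.e. $\epsilon^d(T)=f_T(v_0,v_d)$. Then Theorem \ref{zha1}(iii) gives $\epsilon_T(v)=\min\{f_T(v,v_0),f_T(v,v_d)\}$ for every $v\in V(T)$. So the plan is to compare $f_T(v_0,v_d)$ with $f_T(u,w)$ for every other pair of pendant vertices $u,w$ of the caterpillar. By Theorem \ref{zha1}(ii), only pendant pairs are relevant.

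We write $T$ in the caterpillar form: $n_i$ pendant vertices are attached at $v_i$ for $1\le i\le d-1$. Every connected subtree containing two pendant vertices $u$ (adjacent to $v_a$) and $w$ (adjacent to $v_b$), with $a\le b$, must contain the spine segment $v_a\cdots v_b$. For each spine vertex $v_i$ it may contain any subset of the attached leaves other than $u,w$. It may also extend left along the spine to some $v_c$ with $c\le a$ and right to some $v_e$ with $e\ge b$. This gives the product formula
$$f_T(u,w)=\Bigl(\prod_{i=a}^{b}2^{m_i}\Bigr)\cdot L_a\cdot R_b ,$$
where $m_i$ counts the free leaves at $v_i$. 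Here $L_a$ counts the left extensions: it equals $1+\sum_{c<a}\prod_{c\le i<a}(\text{leaf choices at } v_i)$, with $v_0$ contributing the factor for the option of including the leaf $v_0$. $R_b$ is defined symmetrically. For the pair $(v_0,v_d)$ there are no extensions, and the count is $2^{n_1+\cdots+n_{d-1}}=2^{n-d-1}$.

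Now take any pendant pair $u,w$ with $\{u,w\}\neq\{v_0,v_d\}$. Without loss of generality $u$ is a leaf at $v_a$ other than $v_0$, so $a\ge 1$, and the analysis on the right is symmetric. I would show $f_T(u,w)\ge f_T(v_0,v_d)$ by an injection. Every subtree containing $v_0,v_d$ is the whole spine plus a subset $S$ of the $n-d-1$ non-spine leaves. Map it to a subtree containing $u,w$ as follows. Keep the spine and add $S\cup\{u,w\}$. If $v_0$ (or $v_d$) is not already among $u,w$, decide whether to include it according to the membership of $u$ (respectively $w$) in $S$. This is a bijection onto the subtrees that contain the full spine, so $f_T(u,w)\ge 2^{n-d-1}$. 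To get strictness, exhibit one extra subtree containing $u,w$ but not the whole spine, for instance one omitting $v_0$ and $v_1$ when $a\ge 2$. When $a=1$ the extra subtree must instead use the leaf choices in a different way, so this case needs a short separate check.

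The step I expect to be delicate is making the injection genuinely well defined when $u$ and $w$ are leaves at the same spine vertex, or when one of them is $v_0$ or $v_d$. I would handle these cases by explicitly writing the correspondence of leaf subsets, swapping the roles of $u\leftrightarrow v_0$ and $w\leftrightarrow v_d$. Strictness is not actually needed for the lemma, since ties still allow the use of Theorem \ref{zha1}(iii) with the pair $v_0,v_d$. So the non-strict inequality $f_T(u,w)\ge f_T(v_0,v_d)$ suffices, and the case analysis can stay light.
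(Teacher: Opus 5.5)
Your proposal is correct, and its outer structure matches the paper's. Both prove $\epsilon^d(T)=f_T(v_0,v_d)=2^{n-d-1}$ by comparing against pendant pairs (Theorem~\ref{zha1}(ii)) and then apply Theorem~\ref{zha1}(iii). You are right that a non-strict minimum is all that step needs.

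Where you differ is the key inequality $f_T(u,w)\ge 2^{n-d-1}$. The paper uses exactly the product decomposition you write first: $f_T(x,y)=f_Y(x,y)\bigl(1+f_X(v_{i-1})\bigr)\bigl(1+f_Z(v_{j+1})\bigr)$ with $f_Y(x,y)=2^{n_i+\cdots+n_j-2}$. It then bounds $1+f_X(v_{i-1})\ge 2^{n_1+\cdots+n_{i-1}+1}$, and symmetrically for $Z$. This gives an explicit value, and strictness can be read off by inspection. As written, however, it only treats leaves at distinct interior vertices $v_i\ne v_j$. Same-vertex pairs and pairs involving $v_0$ or $v_d$ are not addressed explicitly, though they follow from the same bound.

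Your injection needs no computation and is uniform over all pendant pairs. The case analysis you worry about disappears if you state it as a count. Once the internal path $v_1\cdots v_{d-1}$ is fixed, each of the $n-d+1$ pendant vertices of $T$ (including $v_0$ and $v_d$) can be kept or dropped independently. So for any two pendant vertices $u,w$, exactly $2^{n-d-1}$ subtrees contain $v_1\cdots v_{d-1}$ together with $u,w$. When $\{u,w\}=\{v_0,v_d\}$, these are all the subtrees containing $u,w$.

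Two small fixes to your wording:
\begin{itemize}
\item The image of your map is the set of subtrees containing the internal path, not the full spine $v_0\cdots v_d$.
\item The literal rule ``include $v_d$ iff $w\in S$'' is not injective when $w=v_0$: then $w\notin S$ always, so the membership of $u$ in $S$ is lost. This is the role swap you anticipated, and the counting formulation makes it unnecessary.
\end{itemize}
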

\begin{proof}
Consider the path $P_{d+1}:v_0v_1\cdots v_d$  in $T$. Then $v_0$ and $v_d$ are pendant vertices and  $f_T(v_0,v_d)=2^{n-d-1}$.
Let $x,y\in V(T)$ be two pendant vertices. Suppose $x$ and $y$ are adjacent to $v_i$ and $v_j$, respectively where $1\leq i
<j\leq d-1$. Let $X,Y$ and $Z$ be the components of $T-v_{i-1}v_{i}-v_jv_{j+1}$, with $v_{i-1}\in V(X)$, $v_i,v_j\in  V(Y)$ and $v_{j+1}\in V(Z)$.  We have $f_Y(x,y)=2^{n_i+\dots+n_j-2}$, $f_X(v_{i-1})=2^{n_{i-1}}+2^{{n_{i-1}+n_{i-2}}}+\dots+2^{n_{i-1}+\dots+n_1+1}$ and $f_Z(v_{j+1})=2^{n_{j+1}}+2^{n_{j+1}+n_{j+2}}+\dots+2^{n_{j+1}+\dots+n_{d-1}+1}$. Then
\begin{align*}
f_T(x,y)&=f_Y(x,y)(1+f_X(v_{i-1})+f_Z(v_{j+1})+f_X(v_{i-1})f_Z(v_{j+1}))\geq 2^{n_1+\dots+n_{d-1}}.
\end{align*}
So $f_T(v_0,v_d)\leq f_T(x,y)$. Hence, $\epsilon^d(T)=f_T(v_0,v_d)$ and  by Theorem $\ref{zha1}$(iii), we have  $\epsilon_T(v)=min\{f_T(v,v_0), f_T(v,v_d)\}$.
\end{proof}

Suppose $d$ is even. Consider the path $P_{d+1}:v_0v_1\cdots v_d$ and form the tree $T^e_{n,d}$ by attaching $n-d-1$ pendant vertices to $v_{\frac{d}{2}}$. Clearly  $T^e_{n,d} \in C^n_d$.
\begin{lemma}\label{lem11}
Suppose $d=2k$. Then
  \begin{enumerate}
  \renewcommand{\labelenumi}{(\roman{enumi})}
\item $\epsilon_{T^e_{n,d}}(v_i)=
      \begin{cases}
           f_{T^e_{n,d}}(v_i,v_d),& \;\mbox{if}\;\;  0\leq i\leq k;\\
           f_{T^e_{n,d}}(v_i,v_0),& \;\mbox{if}\;\; k\leq i\leq d.
         \end{cases}$  
\item $\epsilon_{T^e_{n,d}}(x)=f_{T^e_{n,d}}(x,v_0)=f_{T^e_{n,d}}(x,v_d)$, where $x$ is a pendant vertex attached to $v_k$.
  \end{enumerate}    
\end{lemma}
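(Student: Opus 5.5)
Since $T^e_{n,d}\in C^n_d$, Lemma \ref{lem10} gives $\epsilon_{T^e_{n,d}}(v)=\min\{f_{T^e_{n,d}}(v,v_0),f_{T^e_{n,d}}(v,v_d)\}$ for every vertex $v$. So the whole statement reduces to comparing these two quantities explicitly. We will write $m=n-d-1$ for the number of pendant vertices hanging at $v_k$, and compute $f_{T^e_{n,d}}(v_i,v_0)$ and $f_{T^e_{n,d}}(v_i,v_d)$ in closed form by counting subtrees along the path.

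The counting works as follows. A subtree containing $v_i$ and $v_d$ must contain the whole segment $v_i v_{i+1}\cdots v_d$. On the other side it extends to the left along the path from $v_i$ to some $v_j$ with $0\le j\le i$, and it may include any subset of the $m$ pendant vertices at $v_k$ exactly when $v_k$ is on the chosen segment. For $i\le k$ the vertex $v_k$ is always on the segment, so
\[
f_{T^e_{n,d}}(v_i,v_d)=2^m(i+1),
\]
while
\[
f_{T^e_{n,d}}(v_i,v_0)=2^m(d-i+1).
\]
Indeed, for the second count the segment $v_0\cdots v_i$ extends right to $v_j$ with $i\le j\le d$. The value $j=i$ gives a factor $2^m$ only when $i=k$, and otherwise $v_k$ lies on the segment exactly when $j\ge k$. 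A short check shows this count is at least $2^m(i+1)$ when $i\le k$, because $d-i+1\ge i+1$; for $i<k$ one only needs a crude lower bound. Hence the minimum is $f(v_i,v_d)$ for $0\le i\le k$. The case $i\ge k$ follows from the symmetry $v_j\leftrightarrow v_{d-j}$ of $T^e_{n,d}$, and at $i=k$ the two values coincide.

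For part (ii), take $x$ a pendant vertex at $v_k$. Subtrees containing $x$ and $v_0$ contain $x$ and $v_0\cdots v_k$, and extend right to some $v_j$ with $k\le j\le d$. The other $m-1$ pendant vertices are free, so
\[
f_{T^e_{n,d}}(x,v_0)=2^{m-1}(k+1),
\]
and by symmetry this equals $f_{T^e_{n,d}}(x,v_d)$. Combined with Lemma \ref{lem10}, this gives (ii).

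The only step requiring care is the case $i<k$ in the count of $f_{T^e_{n,d}}(v_i,v_0)$. There the factor $2^m$ does not multiply every term, and I would handle it by bounding only the terms with $j\ge k$. There are $d-k+1=k+1\ge i+1$ such terms, each carrying the factor $2^m$, which already gives at least $2^m(i+1)$; the remaining terms are positive, so the inequality is in fact strict for $i<k$.
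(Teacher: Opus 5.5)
Your proposal is correct and is essentially the paper's proof: you reduce via Lemma~\ref{lem10} to comparing $f_{T^e_{n,d}}(v,v_0)$ with $f_{T^e_{n,d}}(v,v_d)$, count subtrees along the path, and compare, using the reflection $v_j\leftrightarrow v_{d-j}$ where the paper says ``similarly'' for $i\geq k$. One correction: your displayed claim $f_{T^e_{n,d}}(v_i,v_0)=2^m(d-i+1)$ is false for $i<k$, since the exact value is $(k-i)+(k+1)2^m$ (so the difference is $(k-i)(1+2^m)$, as in the paper); your closing lower-bound argument does not use that display and is sound, so just drop it.
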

\begin{proof}
\begin{enumerate}
\item[(i)] For $0\leq i\leq k$, we have $f_{T^e_{n,d}}(v_i,v_0)=(k-i)+(k+1) 2^{n-d-1}$  and $f_{T^e_{n,d}}(v_i,v_d)=(i+1)2^{n-d-1}$. Then $f_{T^e_{n,d}}(v_i,v_0)-f_{T^e_{n,d}}(v_i,v_d)=(k-i)(1+2^{n-d-1})\geq 0$  and equality holds if $i=k$. So, by Lemma $\ref{lem10}$, $\epsilon_{T^e_{n,d}}(v_i)=f_{T^e_{n,d}}(v_i,v_d)$. Similarly, we can show that for $k\leq i\leq d$, $\epsilon_{T^e_{n,d}}(v_i)=f_{T^e_{n,d}}(v_i,v_0)$.
\item[(ii)] We have $f_{T^e_{n,d}}(x,v_0)=2^{n-d-2}(k+1)=f_{T^e_{n,d}}(x,v_d).$ So, by Lemma $\ref{lem10}$, $\epsilon_{T^e_{n,d}}(x)=f_{T^e_{n,d}}(x,v_0)=f_{T^e_{n,d}}(x,v_d)$.
\end{enumerate}   
\end{proof}
\begin{remark}\label{rem3}
Let $3\leq d\leq n-2$. Then the eccentric subgraph index of  $T^e_{n,d}$ is 
 \begin{align*}
   ES(T^e_{n,d})&=2\displaystyle\sum_{i=0}^{k-1} (i+1)2^{n-d-1}+(k+1)2^{n-d-1}+(n-d-1)(k+1)2^{n-d-2}\\
   &=k(k+1)2^{n-d-1}+(k+1)2^{n-d-1}+(n-d-1)(k+1)2^{n-d-2}\\
   &=(k+1)^22^{n-d-1}+(n-d-1)(k+1)2^{n-d-2}\\
   &=(n+1)(\frac{d+2}{2})2^{n-d-2}.
 \end{align*}
\end{remark}
Suppose $d$ is odd. Consider the path $P_{d+1}:v_0v_1\cdots v_d$ and form the tree $T^{o(r,s)}_{n,d}$ by attaching $r\geq 0$ pendant vertices to $v_{\frac{d-1}{2}}$ and $s\geq 0$ pendant vertices to $v_{\frac{d+1}{2}}$ where $r+s=n-d-1$. Clearly  $T^{o(r,s)}_{n,d} \in C^n_d$.
\begin{lemma}\label{lem12}
Let $d=2k-1$ and let $T=T^{o(r,s)}_{n,d}$. Then
\begin{enumerate}
  \renewcommand{\labelenumi}{(\roman{enumi})}
\item $\epsilon_T(v_i)=
      \begin{cases}
           f_T(v_i,v_d),& \;\mbox{if}\;\;  0\leq i\leq k-1;\\
           f_T(v_i,v_0),& \;\mbox{if}\;\; k\leq i\leq d.
         \end{cases}$  
\item $\epsilon_T(x)=
      \begin{cases}
           f_T(x,v_d),& \;\mbox{if $x$ is a pendant vertex attached to $v_{k-1}$;}\\
           f_T(x,v_0),& \;\mbox{if $x$ is a pendant vertex attached to $v_{k}$.}
         \end{cases}$
\end{enumerate}
\end{lemma}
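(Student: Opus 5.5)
By Lemma \ref{lem10}, for every $v\in V(T)$ we have $\epsilon_T(v)=\min\{f_T(v,v_0),f_T(v,v_d)\}$. So the plan is to compute both quantities explicitly for each vertex and compare them. By the symmetry $v_i\leftrightarrow v_{d-i}$, which swaps the roles of $r$ and $s$, it suffices to treat $0\le i\le k-1$ and the pendant vertices attached to $v_{k-1}$. Throughout, write $m=r+s=n-d-1$.

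\emph{Counting subtrees.} A subtree containing two vertices must contain the path between them. The count therefore factors as the number of ways to extend beyond the ends of that path, times $2^{(\text{number of pendant vertices hanging off the path})}$. Both pendant families hang off $v_{k-1}$ and $v_k$.

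\emph{Vertices $v_i$, $0\le i\le k-1$.}
\begin{itemize}
\item For $f_T(v_i,v_d)$, the path $v_i\cdots v_d$ contains both $v_{k-1}$ and $v_k$, so all $m$ pendants are free. The left end can stop at any of $v_i,v_{i-1},\ldots,v_0$. Hence $f_T(v_i,v_d)=(i+1)2^m$.
\item For $f_T(v_i,v_0)$ with $i\le k-2$, the path $v_0\cdots v_i$ carries no pendants. Extending to the right, stopping at $v_j$ contributes:
  \begin{itemize}
  \item $1$ for each $i\le j\le k-2$;
  \item $2^r$ for $j=k-1$;
  \item $2^m$ for each $k\le j\le 2k-1$.
  \end{itemize}
  Hence $f_T(v_i,v_0)=(k-1-i)+2^r+k2^m$.
\item For $i=k-1$, the same count gives $f_T(v_{k-1},v_0)=2^r(1+k2^s)=2^r+k2^m$.
\end{itemize}
In every case $f_T(v_i,v_0)>k2^m\ge (i+1)2^m=f_T(v_i,v_d)$. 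This gives part (i) for $i\le k-1$, and symmetrically $\epsilon_T(v_i)=f_T(v_i,v_0)$ for $i\ge k$.

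\emph{Pendant $x$ attached to $v_{k-1}$ (only if $r\ge1$).}
\begin{itemize}
\item For $f_T(x,v_d)$, the subtree contains $x$ and $v_{k-1}\cdots v_d$. The other $m-1$ pendants are free and there are $k$ choices for the left end. Hence $f_T(x,v_d)=k2^{m-1}$.
\item For $f_T(x,v_0)$, the subtree contains $x$ and $v_0\cdots v_{k-1}$, and the other $r-1$ pendants at $v_{k-1}$ are free. The right end either stops at $v_{k-1}$, or goes to some $v_j$ with $j\ge k$, in which case the $s$ pendants at $v_k$ are free. Hence $f_T(x,v_0)=2^{r-1}(1+k2^s)=2^{r-1}+k2^{m-1}$.
\end{itemize}
This exceeds $f_T(x,v_d)$, so $\epsilon_T(x)=f_T(x,v_d)$. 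The case of a pendant at $v_k$ follows by symmetry, which proves part (ii).

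The only delicate point is getting the bookkeeping right at $v_{k-1}$ and $v_k$, where the two pendant families enter the counts with different exponents. Once the explicit formulas are written down, every comparison reduces to $k2^m$ plus a positive term against at most $k2^m$.
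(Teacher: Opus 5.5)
Your proposal is correct and follows essentially the same route as the paper: it reduces via Lemma \ref{lem10} to comparing $f_T(v,v_0)$ and $f_T(v,v_d)$, computes $f_T(v_i,v_0)=(k-1-i)+2^r+k2^{r+s}$, $f_T(v_i,v_d)=(i+1)2^{r+s}$, $f_T(x,v_0)=2^{r-1}+k2^{r+s-1}$ and $f_T(x,v_d)=k2^{r+s-1}$, compares them, and handles the remaining cases by the symmetry $v_i\leftrightarrow v_{d-i}$ (which swaps $r$ and $s$). Your separate check of the boundary case $i=k-1$ and your note that pendants at $v_{k-1}$ exist only when $r\ge 1$ make explicit what the paper leaves to ``similarly''.
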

\begin{proof}
\begin{enumerate}
\item [(i)] For $0\leq i\leq k-1$, we have $f_T(v_i,v_0)=(k-1-i)+2^r+k2^{r+s}$ and $f_T(v_i,v_d)=(i+1)2^{r+s}$. Then, $f_T(v_i,v_0)-f_T(v_i,v_d)=2^r+(k-i-1)(1+2^{r+s})>0$. So, by Lemma $\ref{lem10}$, $\epsilon_T(v_i)=f_T(v_i,v_d)$. Similarly, it can be checked that for $k\leq i\leq d$, $\epsilon_T(v_i)=f_T(v_i,v_0)$.

\item[(ii)] Let $x$ be a pendant vertex attached to $v_{k-1}$. So, $ f_T(x,v_0)=2^{r-1}+k2^{r+s-1}$ and $f_T(x,v_d)=k2^{r+s-1}$. Then  $f_T(x,v_0)-f_T(x,v_d)=2^{r-1}>0$. So, by Lemma $\ref{lem10}$, $\epsilon_T(x)=f_T(x,v_d)$. Similarly, it can be checked that $\epsilon_T(x)=f_T(x,v_0)$ if $x$ be a pendant vertex attached to $v_k$. 
\end{enumerate}    
\end{proof}
\begin{remark}\label{rem4}
Let $3\leq d\leq n-2$. Then the eccentric subgraph index of  $T^{o(r,s)}_{n,d}$ is 
 \begin{align*}
   ES(T^{o(r,s)}_{n,d})&=\displaystyle\sum_{i=0}^{k-1} (i+1)2^{r+s}  +\displaystyle\sum_{i=k}^{2k-1}2^{r+s}(2k-i)+k(r+s)2^{r+s-1}\\
   &=k(k+1)2^{r+s}+k(r+s)2^{r+s-1}\\
   &=k(k+1)2^{n-d-1}+k(n-d-1)2^{n-d-2}\;\; \mbox{($r+s=n-d-1)$}\\
   &=(\frac{d+1}{2})(n+2)2^{n-d-2}.
 \end{align*}
Since $ES(T^{o(r,s)}_{n,d})$  dose not depend on $r$ or $s$, hence $ES(T^{o(r,s)}_{n,d})$ is fixed for any $r,s\geq 0$ with $r+s=n-d-1.$
\end{remark}
\begin{theorem}\label{thm5}
Let $3\leq d\leq n-2$ and let $T\in \mathcal{T}^n_{d}$. 
  \begin{enumerate}
  \renewcommand{\labelenumi}{(\roman{enumi})}
      \item Suppose $d$ is even. Then $ES(T)\leq (n+1)(\frac{d+2}{2})2^{n-d-2}$  with equality happens if and only if $T\cong T^e_{n,d}$.
      \item Suppose $d$ is odd. Then $ES(T)\leq (\frac{d+1}{2})(n+2)2^{n-d-2}$ with equality happens if and only if $T\cong T^{o(r,s)}_{n,d}$ for some $r,s\geq 0$ with $r+s=n-d-1$.
  \end{enumerate}
\end{theorem}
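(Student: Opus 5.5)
The plan is to reduce to caterpillars and then bound each eccentric subgraph number of a caterpillar directly by a quantity depending only on its position along the diametral path. By Lemma \ref{lem9}, every $T\in\mathcal{T}^n_d\setminus C^n_d$ is strictly beaten by some caterpillar in $C^n_d$. So it suffices to prove the bound for $T\in C^n_d$ and to determine when equality holds there. The extremal values are already known: $ES(T^e_{n,d})$ from Remark \ref{rem3} and $ES(T^{o(r,s)}_{n,d})$ from Remark \ref{rem4}.

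Fix $T\in C^n_d$ with spine $P_{d+1}:v_0v_1\cdots v_d$ and $n_i$ pendant vertices at $v_i$. Put $N=n-d-1=n_1+\cdots+n_{d-1}$. By Lemma \ref{lem10}, $\epsilon_T(v)=\min\{f_T(v,v_0),f_T(v,v_d)\}$ for every $v$. I would write these counts explicitly. A subtree containing $v_i$ and $v_d$ is determined by two choices: its left endpoint $v_j$ on the spine ($0\le j\le i$), and a subset of the pendant vertices at $v_j,\dots,v_{d-1}$. This gives
\begin{align*}
f_T(v_i,v_d)=2^{n_{i+1}+\cdots+n_{d-1}}\Bigl(2^{n_1+\cdots+n_i}+\sum_{j=1}^{i}2^{n_j+\cdots+n_i}\Bigr)\le (i+1)2^{N}.
\end{align*}
Equality holds if and only if $n_1=\cdots=n_{i-1}=0$. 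Symmetrically, $f_T(v_i,v_0)\le (d-i+1)2^N$, with equality iff $n_{i+1}=\cdots=n_{d-1}=0$. For a pendant vertex $x$ attached at $v_i$, forcing $x$ into the subtree halves the count, so $f_T(x,v_d)\le (i+1)2^{N-1}$ and $f_T(x,v_0)\le(d-i+1)2^{N-1}$.

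Hence $\epsilon_T(v_i)\le \min(i+1,d-i+1)\,2^N$, and for a pendant $x$ at $v_i$,
\begin{align*}
\epsilon_T(x)\le \min(i+1,d-i+1)\,2^{N-1}\le \bigl(\lfloor d/2\rfloor+1\bigr)2^{N-1}.
\end{align*}
Summing gives $ES(T)\le 2^N\sum_{i=0}^{d}\min(i+1,d-i+1)+N(\lfloor d/2\rfloor+1)2^{N-1}$.

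I would then check that this matches the remarks. For $d=2k$ the spine sum is $(k+1)^2$ and the pendant term is $N(k+1)2^{N-1}$, which reproduces Remark \ref{rem3}. For $d=2k-1$ the spine sum is $k(k+1)$ and $\lfloor d/2\rfloor+1=k$, which reproduces Remark \ref{rem4}. This proves both inequalities.

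The delicate step is the equality characterization. Equality in the pendant bound forces every pendant vertex to sit at a position $i$ with $\min(i+1,d-i+1)=\lfloor d/2\rfloor+1$. For $d$ even that means $i=k$ only; for $d$ odd it means $i\in\{k-1,k\}$, and $N\ge 1$ because $d\le n-2$. This already gives $T\cong T^e_{n,d}$, respectively $T\cong T^{o(r,s)}_{n,d}$. Conversely, these trees attain equality by Remark \ref{rem3} and Remark \ref{rem4}. Together with the strict inequality of Lemma \ref{lem9} for non-caterpillars, this completes the proof.
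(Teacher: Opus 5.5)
Your proof is correct and takes essentially the paper's route: reduce to caterpillars via Lemma \ref{lem9}, then dominate each caterpillar vertex by vertex by the eccentric subgraph numbers of $T^e_{n,d}$ (resp.\ $T^{o(r,s)}_{n,d}$). Your explicit count of $f_T(v_i,v_d)$ and the bounds $\min(i+1,d-i+1)2^N$ and $(\lfloor d/2\rfloor+1)2^{N-1}$ supply exactly what the paper leaves as ``it can be checked''; reading off the equality case from the pendant positions, rather than from strictness at $v_{d/2}$ as the paper does, also settles the odd case, which the paper handles only with ``similarly.''
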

\begin{proof}
\begin{enumerate}
\item [(i)] Let $d$ be even. Suppose $T$ is not isomorphic to $T^e_{n,d}$. We have two cases:\\
{\bf Case-I:} $T\in C^n_d$\\
Let $P_{d+1}:v_0v_1\cdots v_d$ be a path in $T$. W.l.o. g. we can take $deg_{T}(v_i)\geq 3$ for some $i$ where $1\leq i \leq \frac{d}{2} -1$ as $T$ is not isomorphic to $T^e_{n,d}$. We can get $T^e_{n,d}$  from $T$ by deleting all the pendant vertices of $T$ except $v_0$ and $v_d$ and attaching them as pendant vertices at $v_{\frac{d}{2}}$. Then $\epsilon_T(v_{\frac{d}{2}})\leq f_T(v_{\frac{d}{2}},v_d)< (k+1)2^{n-d-1}=\epsilon_{T^e_{n,d}}(v_{\frac{d}{2}})$. It can be checked that for any $x\in V(T)\setminus v_{\frac{d}{2}}$, $\epsilon_T(x)\leq \epsilon_{T^e_{n,d}}(x)$. Thus $ES(T)<ES(T^e_{n,d})$.\\
{\bf Case-II:} $T\notin C^n_d$\\
By Lemma \ref{lem9}, there exists a $T'\in C^n_d$ such that $ES(T)<ES(T')$ and then by Case-I, we have  $ES(T)<ES(T')\leq ES(T^e_{n,d})$.
Now the result follows from Remark \ref{rem3}.
\item [(ii)]  Let $d$ be odd. Suppose $T$ is not isomorphic to $T^{o(r,s)}_{n,d}$ for any $r,s\geq 0$ with $r+s=n-d-1$. Then similarly to (i), we can show that  $ES(T)<ES(T^{o(r,s)}_{n,d})$ for some $r,s\geq 0$ with $r+s=n-d-1$. Now the result follows from Remark \ref{rem4}.
\end{enumerate}

\end{proof}

\section{Graphs with fixed number of pendant vertices}\label{gfp}

Let $\mathcal{G}^k_n$ be the set of all graphs of order $n$ with $k$ pendant vertices. Note that $K_2$ is the only tree with all pendant vertices. For $n\geq 3$, $K_{1,{n-1}}$ is the only graph with $n-1$ pendant vertices. For $n\geq 4$, if $G\in \mathcal{G}^{n-2}_n$ then  $G\cong T^{o(r,s)}_{n,3}$ for some $r,s\geq 0$ with $r+s=n-4$ and by Theorem \ref{thm5}, $ES(G)=(n+2)2^{n-d-1}$. So, we consider $0\leq k\leq n-3$ and $n\geq 4$. 

Take $m_1\geq m_2\geq \cdots \geq m_{n-k}\geq 0$ with $m_1+m_2+\cdots +m_{n-k}=k$. Consider the complete graph $K_{n-k}$ with vertices $v_1, v_2,\ldots, v_{n-k}$ and form a new graph by attaching $m_i$  pendant vertices at $v_i$ for $1\leq i\leq n-k$. We denote this graph by $K(m_1,m_2,\ldots,m_{n-k})$. For $m_1=k$, we denote it by $K^k_{n-k}$. Let $G^k_n(S)=\{ K(m_1,m_2,\ldots,m_{n-k})| m_1\geq m_2\geq \cdots \geq m_{n-k}\geq 0 \; \mbox{and} \; m_1+m_2+\cdots +m_{n-k}=k \}$.

\begin{lemma}\label{lem13}
Let $G\in \mathcal{G}^k_n$. If $G\notin G^k_n(S)$ then there exist a $G'\in G^k_n(S)$ such that $ES(G)<ES(G')$.
\end{lemma}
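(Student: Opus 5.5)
Let $G\in\mathcal{G}^k_n$ with $G\notin G^k_n(S)$. Let $P$ be the set of pendant vertices of $G$, so $|P|=k$, and let $H=G-P$. Since $0\leq k\leq n-3$ we have $|V(H)|=n-k\geq 3$. Moreover $H$ is connected: a pendant vertex is never a cut vertex of $G$, so removing all of them leaves a connected graph (for $n\geq 4$ no two pendant vertices are adjacent to each other). The plan is to transform $G$ into some $K(m_1,\ldots,m_{n-k})$ through moves that keep exactly $k$ pendant vertices and strictly increase $ES$. We then order the $m_i$ decreasingly to land in $G^k_n(S)$.

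\textbf{Step 1: complete the core.} Add every missing edge between two non-adjacent vertices of $H$. By Lemma \ref{edge}, each added edge strictly increases the eccentric subgraph index.

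We must check that the number of pendant vertices is unchanged. Every vertex of $H$ has degree at least $2$ in $G$, and adding edges only raises degrees, so no vertex of $H$ becomes pendant. The edges incident to vertices of $P$ are untouched, so every vertex of $P$ stays pendant.

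The result is the graph $G'$ obtained from $K_{n-k}$ on $V(H)$ by attaching each $p\in P$ to its original neighbour in $H$. This is exactly $K(m_1,\ldots,m_{n-k})$, where $m_i$ counts the pendant vertices hanging at $v_i$. After relabelling so that $m_1\geq\cdots\geq m_{n-k}$, we get $G'\in G^k_n(S)$.

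\textbf{Step 2: strictness.} If $G\notin G^k_n(S)$, then $H$ cannot already be complete. Indeed, if $H$ were complete, $G$ would itself be of the form $K(m_1,\ldots,m_{n-k})$. So at least one edge is added in Step 1, and Lemma \ref{edge} gives $ES(G)<ES(G')$.

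\textbf{Main obstacle.} The delicate point is the structural claim that $H=G-P$ is connected and that every vertex of $P$ has its unique neighbour in $H$. A pendant vertex adjacent to another pendant vertex would form a $K_2$ component, which is impossible since $G$ is connected with $n\geq 4$. Hence each pendant neighbour lies in $H$. Connectivity of $H$ then follows because any path in $G$ between two vertices of $H$ has only internal vertices of degree at least $2$, so it avoids $P$.

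Everything else follows directly from Lemma \ref{edge}. No new counting of subgraphs is needed, since that lemma gives strict monotonicity under edge addition.
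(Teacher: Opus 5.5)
Your proof is correct and is essentially the paper's argument: join every non-adjacent pair of non-pendant vertices and apply Lemma \ref{edge}. Your extra checks make explicit what the paper leaves implicit, namely that the pendant count stays $k$, that the result is some $K(m_1,\ldots,m_{n-k})$, and that $G\notin G^k_n(S)$ forces at least one edge to be added. The connectivity of $H$ that you verify is not actually needed, since Step 1 makes $H$ complete anyway.
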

\begin{proof}
Let $v_1,v_2,\ldots,v_{n-k}$ be the non-pendant vertices of $G$. Since $G\notin G^k_n(S)$, there exist $v_i$ and $v_j$ such that $v_i$ is not adjacent to $v_j$ in $G$. Obtain a new graph $G'$ from $G$ by joining  all such non-adjacent pair of vertices from the set $\{v_1,v_2,\ldots,v_{n-k}\}$. Then by Lemma \ref{edge}, $ES(G)<ES(G')$.
\end{proof}
For $k\in \{0,1\}$, it follows from Lemma \ref{lem13} that $K^k_{n-k}$ uniquely maximizes the eccentric subgraph index over $\mathcal{G}^k_n$. So, we consider $2\leq k\leq n-3$
\begin{lemma}\label{lem14}
Let $2\leq k\leq n-3$ and let $K^k_{n-k}$ be the graph defined as above. 
\begin{enumerate}
\renewcommand{\labelenumi}{(\roman{enumi})}
\item Then $\epsilon(v_1)=min\{f_{K^k_{n-k}}(v_1,v_j), f_{K^k_{n-k}}(v_1,x)\}$ where $j\neq 1$ and for $2\leq i\leq n-k$, $\epsilon(v_i)=f_{K^k_{n-k}}(v_i,x)$ where $x$ is a pendant vertex.     
\item Let $x$ and $y$ be  pendant vertices at $v_1$. Then $\epsilon(x)=\min\{f_{K^k_{n-k}}(x,y),f_{K^k_{n-k}}(x,v_j)\}$ where $j\neq 1$.
\end{enumerate}
\begin{proof}
\begin{enumerate}
\item [(i)] 
Let $2\leq r,s\leq n-k$. We have $f_{K^k_{n-k}}(v_1,v_r)=2^kf_{K_{n-k}}(v_1,v_r)=2^k f_{K_{n-k}}(v_1,v_s)$. Let $x,y$ be pendant vertices of $K^k_{n-k}$. Then $f_{K^k_{n-k}}(v_1,x) =f_{K^k_{n-k}}(v_1,y)=2^{k-1}f_{K_{k-n}}(v_1)$. So, $\epsilon_{K^k_{n-k}}(v_1)=min\{f_{K^k_{n-k}}(v_1,v_j),f_{K^k_{n-k}}(v_1,x)\}$ where $j\neq 1$.\\

Now let $i\neq 1$ and $r\neq i$. Then  $f_{K^k_{n-k}}(v_i,v_r)=f_{K_{n-k}}(v_i,v_r,\overline{v_1})+2^kf_{K_{n-k}}(v_i,v_r,v_1)$ and $f_{K^k_{n-k}}(v_i,x)=2^{k-1}f_{K_{n-k}}(v_i,v_1)$ where $x$ is a pendant vertex. We have
       \begin{align*}
        f_{K^k_{n-k}}(v_i,v_r)-  f_{K^k_{n-k}}(v_i,x)&=f_{K_{n-k}}(v_i,v_r,\overline{v_1})+2^kf_{K_{n-k}}(v_i,v_r,v_1)-2^{k-1}f_{K_{n-k}}(v_i,v_1)\\
        &=f_{K_{n-k}}(v_i,v_r,\overline{v_1})+2^{k-1}f_{K_{n-k}}(v_i,v_1,v_r)-2^{k-1}f_{K_{n-k}}(v_i,v_1,\overline{v_r})\\
        &>0
       \end{align*}
as we can obtain a connected subgraph of $K_{n-k}$ that contain $v_i,v_1$ and $v_r$ by attaching $v_r$ to $v_1$ in a connected subgraph of $K_{n-k}$ that contain $v_i,v_1$ but not $v_r$  and thus $f_{K_{k-n}}(v_i,v_1,\overline{v_r})\leq f_{K_{k-n}}(v_i,v_1,v_r)$. Hence $\epsilon_{K^k_{n-k}}(v_i)=f_{K^k_{n-k}}(v_i,x)$ for $2\leq i\leq n-k$.        

\item[(ii)] Let $x,y$ be  pendant vertices at $v_1$. Then $f_{K^k_{n-k}}(x,y)=2^{k-2}f_{K_{n-k}}(v_1)$. For $j\geq 2$, we have $f_{K^k_{n-k}}(x,v_j)=f_{K^k_{n-k}}(x,v_1,v_j)<f_{K^k_{n-k}}(x,v_1)$ and $f_{K^k_{n-k}}(x,v_j)=2^{k-1}f_{K_{n-k}}(v_1,v_j)$. Thus  $\epsilon(x)=\min\{f_{K^k_{n-k}}(x,y),f_{K^k_{n-k}}(x,v_j)\}$ where $j\neq 1$. 
\end{enumerate}  
\end{proof}
\end{lemma}
\begin{theorem}\label{thm6}
Let $2\leq k\leq n-3$ and  let $G\in\mathcal{G}^k_n$. Then $ES(G)\leq ES(K^k_{n-k})$ and equality happens if and only if $G\cong K^k_{n-k}$.
\end{theorem}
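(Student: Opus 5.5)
By Lemma \ref{lem13}, it suffices to compare $K^k_{n-k}$ with the other graphs $K(m_1,\ldots,m_{n-k})\in G^k_n(S)$, where $m_1\geq m_2\geq\cdots\geq m_{n-k}$ and $m_1<k$, so at least two clique vertices carry pendant vertices. The plan mirrors the proof of Theorem \ref{thm3}. Let $H=K(m_1,\ldots,m_{n-k})$. Move every pendant vertex of $H$ not already at $v_1$ to $v_1$; this produces $K^k_{n-k}$ on the same vertex set. We then show $\epsilon_H(v)\leq\epsilon_{K^k_{n-k}}(v)$ for every vertex $v$, with strict inequality at some vertex. For each $v$, the comparison has the same shape: take the witness $w$ with $\epsilon_{K^k_{n-k}}(v)=f_{K^k_{n-k}}(v,w)$, which is given by Lemma \ref{lem14}, and prove $f_H(v,w)\leq f_{K^k_{n-k}}(v,w)$. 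Then $\epsilon_H(v)\leq f_H(v,w)$ gives the claim.

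The basic counting tool is as follows. In a clique with pendants, a connected subgraph is determined by a connected subgraph $S$ of $K_{n-k}$ together with any subset of the pendant vertices hanging at vertices of $S$. The single exception is a subgraph consisting of one pendant vertex, or a pendant edge alone, as the case requires. Hence
$$f(v,w)=\sum_{S\ni v',w'} 2^{\sum_{v_i\in S} m_i}\cdot(\text{correction for required pendants}).$$
In $K^k_{n-k}$ every $S$ containing $v_1$ receives the full factor $2^{k}$ (or $2^{k-1}$, $2^{k-2}$ when the pendants $v$ and/or $w$ are forced). In $H$, the factor for $S$ is $2^{\sum_{v_i\in S}m_i}$, which never exceeds the corresponding factor in $K^k_{n-k}$ when $v_1\in S$. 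When $v_1\notin S$, the count in $K^k_{n-k}$ loses these terms only where they would be needed. This is handled by the witness choice below.

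The comparison then splits by vertex type, using Lemma \ref{lem14}.

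\textbf{Clique vertex $v_i$, $i\geq 2$.} Take $w=x$, a pendant at $v_1$ in $K^k_{n-k}$. On the $H$ side, pick $w$ to be a pendant vertex attached to some $v_j$ with $m_j\geq1$. A natural choice is a pendant at $v_1$ in $H$ if $m_1\geq1$, so that the same vertex can be used. Then $f_H(v_i,x)=2^{m_1-1}\sum_{S\ni v_i,v_1}2^{\sum_{l\neq 1}m_l[v_l\in S]}$. This is at most $2^{k-1}f_{K_{n-k}}(v_i,v_1)$, with strict inequality because some $S$ avoids the $v_l$ with $m_l>0$, $l\neq 1$.

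\textbf{Vertex $v_1$.} Here $\epsilon$ is the minimum of two quantities. Compare $f_H(v_1,v_j)$ with $2^kf_{K_{n-k}}(v_1,v_j)$, and $f_H(v_1,x)$ with $2^{k-1}f_{K_{n-k}}(v_1)$, for a pendant $x$ at $v_1$ in $H$. Both inequalities follow termwise from the factor bound.

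\textbf{Pendant vertex $z$.} Take $w$ to be the minimizer given by Lemma \ref{lem14}(ii). If $z$ hangs at $v_j$ in $H$, identify it with a pendant at $v_1$ in $K^k_{n-k}$. Bound $f_H(z,w)$ by the same termwise argument, after relabeling so that $z$'s support vertex plays the role of $v_1$. The exponents then satisfy $\sum_{v_i\in S}m_i-[\text{forced}]\leq k-[\text{forced}]$.

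The main obstacle is the pendant and $v_1$ cases, because the witnesses in the two graphs may be of different types: one may be the pendant–pendant count $2^{k-2}f_{K_{n-k}}(v_1)$ and the other the pendant–clique count $2^{k-1}f_{K_{n-k}}(v_1,v_j)$. Which one is smaller depends on $n-k$. My first attempt is to choose, in $H$, a witness of the same type as the minimizer in $K^k_{n-k}$. If the minimizer is $y$, a pendant, choose a second pendant of $H$; this exists since $k\geq2$. If the minimizer is $v_j$, choose $v_j$ itself, or a clique vertex with $m_j=0$, which exists since $k\leq n-3$ is not forcing every clique vertex to carry pendants, though I would double-check this. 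Then the termwise monotone bound applies directly.

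Strictness comes from $v_1$ or from any $v_i$ with $i\geq2$ and $m_i>0$, exactly as in Theorem \ref{thm3}. Summing over all vertices gives $ES(H)<ES(K^k_{n-k})$. Combined with Lemma \ref{lem13}, this yields the theorem together with the uniqueness of the extremal graph.
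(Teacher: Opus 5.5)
Your proposal is correct and follows essentially the same route as the paper: reduce to $G^k_n(S)$ by Lemma~\ref{lem13}, move all pendants to $v_1$, and compare $\epsilon$ vertex by vertex against the witnesses of Lemma~\ref{lem14}, with your termwise bound $2^{\sum_{v_i\in S}m_i}\le 2^k$ filling in the cases the paper dismisses as ``similarly''. Three small corrections:

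\begin{enumerate}
\item[(i)] Strictness at $v_i$ with $m_i>0$ fails when $v_1,v_i$ are the only support vertices, since then $m_1+m_i=k$ and $f_H(v_i,x)=f_{K^k_{n-k}}(v_i,x)$.
\item[(ii)] A clique vertex with $m_j=0$ need not exist (e.g.\ $n=6$, $k=3$, $m=(1,1,1)$), but this is harmless because any $v_j$ other than the support of $z$ works.
\item[(iii)] Take strictness from $v_1$ instead. The $S=\{v_1\}$ term gives $f_H(v_1,x)<2^{k-1}f_{K_{n-k}}(v_1)$, and choosing $j$ with $m_1+m_j<k$ (available since $n-k\ge 3$) gives $f_H(v_1,v_j)<2^{k}f_{K_{n-k}}(v_1,v_j)$. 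Alternatively, as the paper does, use the witness $z$ at $v_i$, for which $f_H(v_1,z)\le 2^{k-1}f_{K_{n-k}}(v_1,v_i)$ lies strictly below both terms of the minimum.
\end{enumerate}
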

\begin{proof}
Suppose $G$ is not isomorphic to $K^k_{n-k}$. We have two cases:\\
{\bf Case-I:} $G\in G^k_n(S)$\\
We can obtain $K^k_{n-k}$ from $G$ by deleting all the pendant vertices of $G$ and attaching them as pendant vertices to $v_1$.
Since $G$ is not isomorphic to $K^k_{n-k}$, for some $i\geq 2$ there exists a vertex $v_i$ in $G$ with degree at least $n-k$. Let $z$ be a pendant vertex adjacent to $v_i$ in $G$. Let $x$ be a pendant vertex adjacent to $v_1$ in $G$. Then $\epsilon_G(z)\leq f_G(z,x)\leq 2^{k-2}f_{K_{n-k}}(v_i,v_1)$. By Lemma \ref{lem14}(ii), we have $\epsilon_{K^k_{n-k}}(z)=\min\{f_{K^k_{n-k}}(z,x),f_{K^k_{n-k}}(z,v_i)\}=\min\{2^{k-2}f_{K_{n-k}}(v_1),2^{k-1}f_{K_{n-k}}(v_1,v_i)\}>2^{k-2}f_{K_{n-k}}(v_i,v_1)\geq \epsilon_G(z)$. Thus $\epsilon_G(z)<\epsilon_{K^k_{n-k}}(z)$. Also 
\begin{align*}
\epsilon_G(v_1)&\leq f_G(v_1,z)\leq 2^{k-1}f_{K_{n-k}}(v_1,v_i)\\
&< \min\{2^kf_{K_{n-k}}(v_1,v_i),2^{k-1}f_{K_{k-n}}(v_1)\}\\
&=\min\{f_{K^k_{n-k}}(v_1,v_i), f_{K^k_{n-k}}(v_1,x)\}\\
&=\epsilon_{K^k_{n-k}}(v_1) \;\; \mbox{(by Lemma \ref{lem14}(i)).}
\end{align*}
Similarly, it can be shown that $\epsilon_G(v)\leq \epsilon_{K^k_{n-k}}(v)$ for any $v\in V(G)\setminus \{v_1,z\}$ using Lemma \ref{lem14}. Thus $ES(G)< ES(K^k_{n-k})$.

\noindent {\bf Case-II:} $G\notin G^k_n(S)$\\
By Lemma \ref{lem13}, there exists a $G'\in G^k_n(S)$ such that $ES(G)<ES(G')$ and then by Case-I, we have  $ES(G)<ES(G')\leq ES(K^k_{n-k})$. This proves the result.
\end{proof}
  
\section{Conclusions}

In this work, we started the study of obtaining the extremal graphs associated with eccentric subgraph index. We obtain the graphs which maximizes the eccentric subgraph index over all unicyclic graphs, over all unicyclic graphs with fixed girth and over all graphs with fixed number of pendant vertices. We also characterized the trees which maximizes the eccentric subgraph index over all trees on $n$ vertices with diameter $d$. It will be interesting to know the graphs which minimizes eccentric subgraph index over the above classes of graphs or trees.\\

\noindent{\bf Conflict of interest:} On behalf of all authors, the corresponding author states that there is no conflict of interest.\\

\noindent{\bf Funding:} The author Akash De is supported by the Council of Scientific and Industrial Research(CSIR) Grant No. 09/1002(17117)/2023-EMR-I.\\

\noindent{\bf Data availability:} No data was used for the research described in the article.\\

\noindent{\bf Declaration of generative AI use:} The authors report generative AI was not used in their research or preparation of this manuscript.

\end{document}